\theoremstyle{thmstylethree}%
\newcommand{\p}{\mathbb{P}}
\newcommand{\F}{\mathbb{F}}
\newcommand{\lra}{\longrightarrow}
\newcommand{\Lra}{\Longrightarrow}
\newcommand{\ff}{\mathcal{F}}
\newcommand{\fq}{\mathbb{F}_q}
\newcommand{\fqs}{\mathbb{F}_{q^2}}
\newcommand{\kk}{\mathbb K}
\newcommand{\xx}{\mathcal X}
\newcommand{\cc}{\mathcal C}
\newcommand{\Char}{\operatorname{char}}
\newcommand{\uu}{\mathcal{U}}
\newcommand{\s}{\mathcal{S}}
\newcommand{\fqc}{\overline{\mathbb{F}}_q}
\theoremstyle{plain}
\newtheorem{thm}{Theorem}[section]
\newtheorem{prop}[thm]{Proposition}
\newtheorem{lem}[thm]{Lemma}
\newtheorem{cor}[thm]{Corollary}
\newtheorem{rem}[thm]{Remark}
\newtheorem{ex}[thm]{Example}
\newtheorem{prob}[thm]{Problem}
\begin{document}

\title[Curves on Frobenius nonclassical loci of hypersurfaces]{Curves on Frobenius nonclassical loci of hypersurfaces}


\author[1]{\fnm{Nazar} \sur{Arakelian}}\email{n.arakelian@icmc.usp.br}
\equalcont{These authors contributed equally to this work.}
\author*[2]{\fnm{Pietro} \sur{Speziali}}\email{speziali@unicamp.br}
\equalcont{These authors contributed equally to this work.}


\affil[1]{\orgdiv{Instituto de Ci\^encias Matem\'aticas e de Computa\c c\~ao}, \orgname{Universidade de S\~ao Paulo}, \orgaddress{\street{Avenida Trabalhador S\~ao-carlense, 400}, \city{S\~ao Carlos}, \postcode{13566-590}, \state{SP}, \country{Brazil}}}

\affil*[2]{\orgdiv{Instituto de Matem\'atica, Estat\'istica e Computa\c c\~ao Cient\'ifica}, \orgname{Universidade Estadual de Campinas }, \orgaddress{\street{Rua S\'ergio Buarque de Holanda, 651}, \city{Campinas}, \postcode{13083-859}, \state{SP}, \country{Brazil}}}



\abstract{Let $\s\subset \p^n$ be an absolutely irreducible projective hypersurface defined over a finite field $\fq$, equipped with the $\fq$-Frobenius map $\Phi_q$. 
In this paper, we investigate irreducible curves  $\xx \subset \s_{\Phi_q}$, where $\s_{\Phi_q}$ is the $\fq$-Frobenius nonclassical locus of $\s$. In particular, we show that every curve $\xx \subset \s_{\Phi_q}$ such that the restriction of the Gauss map of $\s$ to $\xx$ is inseparable is $\fq$-Frobenius nonclassical. This provides a way to construct new Frobenius nonclassical curves, which are curves that tend to have many $\fq$-rational points. We also prove that a certain type of Frobenius nonclassical hypersurfaces $\s$ defined by separated variables are such that their Gauss maps restricted to any curve contained in $\s$ is inseparable. Finally, in parallel with the plane curve cases, we show that if the strict Gauss map $\Gamma$ of a $\fq$-Frobenius nonclassical hypersurface $\s$ is given by $p$ powers, then $\Gamma$ is purely inseparable.}

\keywords{Algebraic curves, finite fields, Rational points, Frobenius nonclassical curves}



\maketitle

\section {Introduction}\label{intro}

Let $\fq$ be a finite field with $q$ elements, where $q$ is a power of a prime $p$, and let $\fqc$ be a fixed algebraic closure of $\fq$. Denote by  $ \p^n(\fqc)$  the $n$-dimensional projective space defined over $\fqc$, and let $\xx$ be a (projective, absolutely irreducible, nonsingular) algebraic curve defined over $\fq$ embedded in $ \p^n(\fqc)$. As J. P. Serre famously stated in his 1985 interview while visiting the National University of Singapore: 

\begin{quote}
Presently, the topic which amuses me most is counting points on algebraic curves over finite fields. It is a kind of applied mathematics: you try to use any tool in algebraic geometry and number theory that you know of ... and you don't quite succeed!
\end{quote}

The difficulty of counting exactly the number $N_q(\xx)$ of $\fq$-rational points of a curve is the reason why, in the past century or so, researchers have devoted a considerable amount of effort to find good bounds for $N_q(\xx)$. The most famous of such bounds is the Hasse-Weil-Serre bound
$$
N_q(\xx) \leq q+1+g\lfloor2\sqrt{q}\rfloor,
$$
where $g$ is the genus of $\xx$. This bound is sharp, and curves attaining it have attracted a lot of attention due to their application to Coding Theory via the construction of Goppa Codes. However, Coding Theory also suggested that the Hasse-Weil-Serre bound is far from optimal. In fact, if there existed curves of very large genus whose number of points was close to the bound, it would have been possible to construct codes that were so good that their  parameters would exceed proven bounds \cite{serre}. 

This naturally led to consider the problems of improving on the Hasse-Weil-Serre bound and of constructing curves with many points with respect to these new bounds. The theory of St\"ohr and Voloch \cite{SV} then provided powerful tools in order to tackle both these problems. At the core of St\"ohr-Voloch theory lies the following idea. Given a linear series $g^r_n$ of dimension $r$ and degree $n$ on a curve $\xx$, knowledge of its geometric data gives rise to a \emph{good} bound for $N_q(\xx)$ provided that $\xx$ is $\fq$-Frobenius classical with respect to the linear series. Curves that are $\fq$-Frobenius nonclassical with respect to this linear series have potentially many points, in the sense that the latter bound does not apply to them. This strategy has been successfully applied by several authors in the past forty years; see \cite{Ar2,AB,Bo,Gi,HV} for instance.
  
  However, in general, constructing Frobenius nonclassical curves is a highly nontrivial task, and it must always rely on \emph{ad hoc} techniques. In this paper, we construct Frobenius nonclassical curves by studying curves in $ \p^n(\fqc)$ lying on Frobenius nonclassical hypersurfaces. Frobenius nonclassical hypersurfaces are natural generalizations of plane Frobenius nonclassical curves and  have recently been studied in \cite{ADL, ADL2,BN}.  
  While our approach is relatively new in the literature, it provides a very natural framework for tackling the problem. Indeed, as far as the authors know, our methods  subsumes all the examples known in the literature for $p>n$ (see Section 3). 

 Let $\s\subset \p^n$ be an absolutely irreducible projective hypersurface defined over a finite field $\fq$, equipped with the $\fq$-Frobenius map $\Phi_q$. We denote by $\s_{\Phi_q}$ the Frobenius nonclassical locus of $\s$, that is, the subset of regular points $P$ of $\s$ such that the image of Frobenius $\Phi_q(P)$ belongs to the tangent hyperplane $T_P(\s)$; $\s$ is Frobenius nonclassical if its set of regular points coincides with $\s_{\Phi_q}$ (note that, when $\s$ is a plane curve, this definition coincides with the usual definition of a Frobenius nonclassical curve). Also, we denote by $\Gamma$ the Gauss map  of $\s$. Under some minor conditions, our first result (Proposition \ref{class}) states that a necessary and sufficient condition for a curve $\xx$ in $ \p^n(\fqc)$ to be nonclassical is to be contained in the nonsingular locus of a hypersurface $\s$ whose Gauss map restricted to $\xx$ only admits $p$-powers as coordinate functions. 

Our paper is organized as follows. In Section 2, we give the preliminary definitions, concepts and results on Frobenius nonclassicality needed for a mostly self-contained reading of our paper. 

Section 3 deals with curves lying on the Frobenius nonclassical locus of hypersurfaces and conditions for them to be (Frobenius) nonclassical. Here, our main result is Theorem \ref{fncl}, where we prove that a curve  $\xx$ that is (a) contained in the Frobenius nonclassical locus of a hypersurface $\s$ and (b) such that the restriction of the Gauss map of $\s$ to $\xx$ is inseparable is $\fq$-Frobenius nonclassical. At the end of the section, we present a number of examples illustrating our results. We also provide an approach to construct plane curves which are $\fq$-Frobenius nonclassical with respect to the linear system of all plane curves of a given degree based on the construction of certain $\fq$-Frobenius nonclassical hypersurfaces (see Theorem \ref{wrts}). We point out that these plane curves can be useful from the point of view of finite geometry, see for instance \cite{GPTU}.

In Section 4, we deal with certain families of Frobenius nonclassical hypersurfaces given by separated variables, following the definition given in \cite{ADL2}. Broadly speaking, a hypersurface $\s: F = 0$ is given by separated variables if $F = G+H$ where $G$ and $H$ depend on two disjoint sets of variables. By specializing $H$, we give several results regarding the $\fq$-Frobenius classicality of $\s$ and, also, on the Frobenius nonclassicality of curves lying on such hypersurfaces. For instance, in Theorem \ref{kummer}, we explore the case where $H = -X_n^d$ and thus, $\s$ can be regarded as a Kummer cover of the projective space $\p^{n-1}$. In this case, we give a necessary and sufficient condition for $\s$ to be $\fq$-Frobenius nonclassical. As a consequence, in Corollary \ref{kummer2}, we prove that any curve contained in these "Kummer hypersurfaces" is Frobenius nonclassical provided that the hypersurface is Frobenius nonclassical. Along similar lines, in Proposition \ref{sepprot}, Corollary \ref{sepres}, and Theorem \ref{46} we deal with hypersurfaces given by a polynomial of the kind
\begin{equation}
F(X_0,\ldots,X_n)=G(X_0,X_1,X_2)+a_3X_3^d+\cdots+a_nX_n^d.
\end{equation}
Here, results on the Frobenius nonclassicality of these hypersurfaces and of the curves lying on them rely also on Homma's results on plane nonclassical curves \cite{Ho}. 

Section 5 deals with Frobenius nonclassical hypersurfaces whose Gauss map is given by $p$-powers. It is important to point out that, up to our knowledge, all known examples of Frobenius nonclassical hypersurfaces with generically finite Gauss map satisfy this property. Our main result in this section is Theorem \ref{55}, where we prove the pure inseparability of the Gauss map of such hypersurfaces.

Finally, in Section 6, we state some open problems that naturally arise from our research. 

\section{Preliminaries}

Let $\fq$ be the finite field of order $q$, where $q=p^h$ with $h>0$ and $p>0$ is a prime number, and let $\mathcal{U} \subset \p^n(\fqc)$ be a (projective, geometrically irreducible, nondegenerate, algebraic) variety of dimension $s$ defined over $\fq$, where $\fqc$ denotes the algebraic closure of $\fq$, with $n \geq 2$. Given an extension $\mathbb{H}$ of $\fq$, the function field of $\mathcal{U}$ over $\mathbb{H}$ will be denoted by $\mathbb{H}(\uu)$. Recall that $s$ is the transcendence degree of $\fq(\uu)$ over $\fq$. We say that $\uu$ is a hypersurface (respectively, a curve) if $s=n-1$ (resp. $s=1$). If $X_0,\ldots,X_n$ denote the projective coordinates of $\p^n(\fqc)$, the affine coordinate functions of $\uu$ are $x_0,\ldots,x_n$, where $x_i$ is the residue of $X_i/X_0$ in $\fq(\uu)$ for $i=0,\ldots,n$. Suppose that $\uu$ is defined over a perfect field $\kk$ of characteristic $p>0$ and let $\{y_1,\ldots,y_s\}$ be a separable transcendence basis of $\kk(\uu)$ over $\kk$. Then, for a given $r>0$, one can check via intermediate fields that the extension $\kk(y_1,\ldots,y_s)/\kk(y_1^{p^r}, \ldots,y_s^{p^r})$ is purely inseparable of degree $p^{rs}$. Thus the extension $\kk(\uu)/\kk(y_1^{p^r}, \ldots,y_s^{p^r})$ is inseparable with inseparable degree $p^{rs}$. The $\F_{p^r}$-Frobenius map $\Phi_{p^r}:\uu \lra \Phi_{p^r}(\uu) \subset\p^n(\overline{\kk})$ is defined by $\Phi_{p^r}:P \mapsto P^{p^r}$. It induces a map from the function field $\overline{\kk}(\uu)$ onto $\overline{\kk}(\uu)^{p^r}=\{f^{p^r} \ | \ f \in \overline{\kk}(\uu)\}$. Therefore, $\Phi_{p^r}$ is purely inseparable of degree $[\overline{\kk}(\uu):\overline{\kk}(\uu)^{p^r}]=p^{rs}$. 

 Now  let $\xx \subset \p^n(\fqc)$ be a curve and let $P \in \xx$ be a nonsingular point. Then, there exists a sequence $(j_0(P), \ldots,j_n(P))$ of integers with $0 \leq j_0(P)<\cdots<j_n(P)$, which is defined by all the possible intersection multiplicities of $\xx$ and some hyperplane of $\p^n(\fqc)$ at $P$. Such a sequence is  called the order sequence of $P$. From \cite[Section 1]{SV}, except for finitely many points on $\xx$, this sequence is the same. It is called the order sequence of $\xx$, and it is denoted by $(\varepsilon_0,\ldots, \varepsilon_n)$. If $(\varepsilon_0,\ldots, \varepsilon_n)=(0,\ldots,n)$, the curve $\xx$ is said to be a classical curve. Otherwise, $\xx$ is nonclassical.  Let $\zeta \in \fqc(\xx)$ be a separating element and let $k$ be a nonnegative integer. Given $f \in \fqc(\xx)$, the $k$-th Hasse derivative of $f$ with respect to $\zeta$ is denoted by $D_\zeta^{(k)}f$. The order sequence of $\xx$ is also defined as  the minimal sequence, with respect to the lexicographic order, for which
$$
\det\left(D_\zeta^{(\varepsilon_i)}x_j\right)_{0 \leq i,j \leq n} \neq 0.
$$
Given a nonsingular point $P \in \xx$, the osculating hyperplane to $\xx$ at $P$, denoted by $H_P(\xx) \subset \p^n(\fqc)$, is the unique hyperplane such that the intersection multiplicity of $H_P(\xx)$ and $\xx$ at $P$ is $j_n(P)$ (see \cite[Section 1]{SV}).


If $\xx$ is defined over $\fq$ by coordinate functions $x_0,\ldots,x_n \in \fq(\xx)$, by \cite[Proposition 2.1]{SV}, there exists a sequence of nonnegative integers $(\nu_0,\ldots,\nu_{n-1})$, chosen minimally in the lexicographic order, such that
\begin{equation}\label{fncl}
\left|
  \begin{array}{ccc}
  x_0^q & \ldots & x_n^q \\
  D_\zeta^{(\nu_0)}x_0 & \ldots & D_\zeta^{(\nu_0)}x_n \\
   \vdots & \cdots & \vdots \\
  D_\zeta^{(\nu_{n-1})}x_0 & \cdots & D_\zeta^{(\nu_{n-1})}x_n
  \end{array}
  \right| \neq 0.
  \end{equation}
This sequence is called the $\fq$-Frobenius order sequence of $\xx$. It does not depend on $\zeta$ and it is invariant under change of projective coordinates of $\xx$ (see \cite[Proposition 1.4]{SV}). The curve $\xx$ is called $\fq$-Frobenius classical if $(\nu_0,\ldots,\nu_{n-1})=(0,\ldots,n-1)$, and $\fq$-Frobenius nonclassical otherwise. From \cite[Proposition 2.1]{SV}, there exists $I \in \{1, \ldots,n\}$ for which $\{\nu_0,\ldots,\nu_{n-1}\}=\{\varepsilon_0,\ldots, \varepsilon_n\} \backslash \{\varepsilon_I\}$.

Let $(\p^n(\fqc))^{'}$ be the dual projective space. The strict Gauss map of $\xx$, denoted by $\gamma$, is the map defined in the nonsingular locus of $\xx$ by $\gamma(P)=(H_P(\xx))^{'} \in (\p^n(\fqc))^{'}$, where $L^{'}$ denotes the dual of a hyperplane $L$. The Zariski closure of the image of $\gamma$ is the strict dual of $\xx$, denoted by $\xx^{'}$. Note that $\gamma$ can be extended to a morphism of a nonsingular model of $\xx$. For simplicity, we will denote such a morphism also by $\gamma$.

Assume $p>n$ and $\varepsilon_n=p^r$ for some $r>0$. If $\xx$ is defined by the coordinate functions $x_0,\ldots,x_n$, then \cite[Theorem 7.65]{HKT} ensures that there exist $z_0,\ldots,z_n \in \fq(\xx)$ such that $z_i$ is a separating element for at least one $i$ and
\begin{equation}\label{e}
z_0^{p^r}x_0+\cdots+z_n^{p^r}x_n=0.
\end{equation}
Furthermore, for a general point $P \in \xx$, the osculating hyperplane to $\xx$ at $P$ is defined by
$$
H_P(\xx):(z_0^{p^r}(P))X_0+\cdots+(z_n^{p^r}(P))X_n=0.
$$
Therefore, in such a case, we conclude that $\gamma=\Phi_{p^r} \circ \gamma_s=(z_0^{p^r}:\cdots:z_n^{p^r})$, where $\gamma_s=(z_0:\cdots:z_n)$ is a separable morphism. In particular, the separable degree of $\gamma$ is given by $\deg_s \gamma= \deg \gamma_s$ and  the inseparable degree of $\gamma$ is given by $\deg_i \gamma =\deg \Phi_{p^r}=p^r$. 

We now recall the definition of the conormal variety. Let $\kk$ be an arbitrary field and  let $\uu \subset \p^n(\kk)$ be a closed variety. The nonsingular locus of $\uu$ is denoted by $\uu_{\text{reg}}$. Let $T_P\uu$ be the tangent space to $\uu$ at $P \in \uu_{\text{reg}}$.  The conormal variety of $\uu$ is defined by
$$
C(\uu) =\overline{\{(P,H^{'}) \ | \ T_P\uu \subset H \}} \subset \uu \times (\p^n(\kk))^{'},
$$
where the overline denotes the Zariski closure of the set and $H$ is a hyperplane. The second projection $\pi^\prime:C(\uu) \lra (\p^n(\kk))^{'}$ is known as the conormal map of $\uu$. The image of $C(\uu)$ in $(\p^n(\kk))^{'}$ under the conormal map $\pi^\prime$ is denoted by $\uu^{*}$, and it is called the dual variety of $\uu$ (note that this coincides with the strict dual when $\uu$ is a hypersurface). The variety $\uu$ is said to be reflexive if $C(\uu)=C(\uu^*)$, and nonreflexive otherwise. By a remarkable result, known as the "Monge-Segre-Wallace" criterion (see e.g. \cite{Wa}),  we have that $\uu$ is reflexive if, and only if, the conormal map $\pi^\prime:C(\uu) \lra \uu^*$ is separable. It follows directly from  the Monge-Segre-Wallace criterion that every algebraic variety defined over a field of zero characteristic is reflexive. 

Given an absolutely irreducible hypersurface $\s \subset \p^n$ defined over $\fq$ by a homogeneous polynomial $F$, we define the $\fq$-Frobenius nonclassical locus of $\s$ by
$$
\s_{\Phi_q}=\{P \in \s_{\operatorname{reg}} \ : \ \Phi_q(P) \in T_P(\s)\},
$$
where $\s_{\operatorname{reg}} $ denotes the nonsingular locus of $\s$  and $T_P(S)$ denotes the tangent hyperplane to $\s$ at $P$. We say that $S$ is $\fq$-Frobenius nonclassical when $\s_{\Phi_q}=\s_{\operatorname{reg}}$. 

The Gauss map of $\s$ is the map $\Gamma$ defined in the nonsingular locus of $\s$ by $\Gamma(P)=T_P(\s)^\prime \in (\p^n)^\prime$, where $(\p^n)^\prime$ denotes the dual projective space and $H^\prime$ denotes the dual of a hyperplane $H$. The (Zariski) closure of the image of $\Gamma$ is the strict dual variety $\s^\prime$ of $\s$.  Denote by $x_0,\ldots,x_n \in \fq(\s)$ the coordinate functions of $\s$ and set
$$
F_i=\frac{\partial F}{\partial X_i}(x_0:\ldots:x_n).
$$
Since $\s:F=0$,  we have that
$$
\Gamma=(F_0:\ldots:F_n).
$$
Note that $\s$ will be $\fq$-Frobenius nonclassical if, and only if,
$$
\sum_{i=0}^{x}F_ix_i^q =0 \in \fq(\s),
$$
that is, if and only if $F$ divides $\sum\limits_{i=0}^{x}F_iX_i^q$. This happens in particular when $\sum\limits_{i=0}^{x}F_iX_i^q=0$ (as a polynomial). Under the hypothesis that $\deg(\s) \neq 0 \mod p$, the latter case was classified in \cite{ADL}. Given the above, we shall henceforth assume that $\s$ satisfies the following conditions:
\begin{enumerate}
\item[{\rm i)}] $\deg(\s) \neq 0 \mod p$;
\item[{\rm ii)}] $\sum\limits_{i=0}^{x}F_iX_i^q  \text{ is a nonzero polynomial}.$
\end{enumerate}

\section{Curves on Frobenius nonclassical loci of hypersurfaces}

This section is mainly devoted to the study of the $\fq$-Frobenius nonclassical locus of hypersurfaces and of the curves defined over $\fq$ lying on them. We start by giving a characterization of nonclassical curves in $\p^n(\kk)$, where $\kk$ is algebraically closed of characteristic $p>n$, in terms of Gauss maps of hypersurfaces.

\begin{prop}\label{class}
Let $\s \subset \p^n$ be an absolutely irreducible nondegenerate hypersurface  defined over a perfect field $\kk$ of characteristic $p>n$. 
Let $\xx$ be a curve defined over $\kk$ on $\s$ such that 
\begin{enumerate}
\item[{\rm i)}] $\xx$ is not contained in the singular locus $\operatorname{Sing}(\s)$ of $\s$;
\item[{\rm ii)}]The restriction of the Gauss map of $\s$ to $\xx$ is inseparable.
\end{enumerate}
Then, $\xx$ is nonclassical.  The converse also holds if $\kk$ is algebraically closed. More precisely, if $\kk$ is algebraically closed of characteristic $p>n$, then every nonclassical curve $\xx \subset \p^n$ defined over $\kk$ is contained in a (not necessarily unique) hypersurface $\s \subset \p^n$ with Gauss map admitting $p$-powers as coordinate functions. Also, $\xx \not\subset \operatorname{Sing}(\s)$.
\end{prop}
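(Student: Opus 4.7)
The plan is to treat both directions separately, with Theorem 7.65 of \cite{HKT} providing the main technical input: the characterization of nonclassicality of a curve $\xx$ via the existence of a relation $\sum w_i^{p^r}x_i=0$ in which at least one $w_i$ is a separating element.

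For the forward direction, let $F$ be a defining polynomial of $\s$, so that $\Gamma=(F_0:\cdots:F_n)$ with $F_i=\partial F/\partial X_i$. By hypothesis (i) a generic point of $\xx$ is nonsingular on $\s$, and the restriction $\Gamma|_\xx$ is the rational map $(F_0(x):\cdots:F_n(x)):\xx\to(\p^n)^\prime$. Hypothesis (ii), combined with the standard factorization of an inseparable morphism from a curve through a Frobenius (as in the decomposition $\gamma=\Phi_{p^r}\circ\gamma_s$ recalled before equation (\ref{e})), yields, after a common rescaling in $\kk(\xx)$, an identity of the form
\[
(F_0(x):\cdots:F_n(x))=(w_0^{p^r}:\cdots:w_n^{p^r})
\]
for some $r\geq 1$ and $w_0,\ldots,w_n\in\kk(\xx)$ with at least one $w_i$ a separating element. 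Euler's identity $\sum X_iF_i=(\deg F)F$, evaluated on $\xx$ where $F(x)=0$, then forces $\sum w_i^{p^r}x_i=0$ on $\xx$. Appealing to \cite[Theorem 7.65]{HKT}, this relation identifies the osculating hyperplane of $\xx$ at a generic point and gives $\varepsilon_n=p^r\geq p>n$, whence the order sequence of $\xx$ is not $(0,1,\ldots,n)$, i.e. $\xx$ is nonclassical.

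For the converse, assume $\kk$ algebraically closed and $\xx$ nonclassical. By \cite[Theorem 7.65]{HKT} there exist $r\geq 1$ and $z_0,\ldots,z_n\in\kk(\xx)$, with at least one $z_i$ a separating element, such that $\sum z_i^{p^r}x_i=0$ on $\xx$. Writing $z_i=Z_i(x)/H(x)$ for homogeneous polynomials $Z_i,H\in\kk[X_0,\ldots,X_n]$ of a common degree and clearing denominators, define
\[
F=\sum_{i=0}^n Z_i^{p^r}X_i.
\]
This is a nonzero homogeneous polynomial vanishing on $\xx$, and a direct computation gives $\partial F/\partial X_j=Z_j^{p^r}$, since every other Leibniz contribution carries a factor of $p^r=0$ in $\kk$. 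Let $\s$ be an absolutely irreducible component of $V(F)$ containing $\xx$, defined by an irreducible factor $F^\ast$ with $F=F^\ast\cdot G$. Leibniz gives $F_j=F^\ast_j G+F^\ast G_j$, and $F^\ast=0$ on $\s$, so $F^\ast_j\cdot G=Z_j^{p^r}$ on $\s$; rescaling the Gauss map of $\s$ by $G\in\kk(\s)$ thus produces the desired $p$-th power coordinate functions $(Z_0^{p^r}:\cdots:Z_n^{p^r})$.

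The main obstacle is to confirm that $\xx\subset\s_{\operatorname{reg}}$, because one must rule out vanishing of all $F^\ast_j$ at points of $\xx$. For this, one uses that $(z_0:\cdots:z_n)$ realizes the separable part $\gamma_s$ of the strict Gauss map of $\xx$, hence extends to a morphism on the nonsingular model of $\xx$ with no base points; after clearing any common factor of the $Z_i$ and $H$, this forces at least one $Z_j(P)\neq 0$ at each $P\in\xx$, and therefore at least one $F^\ast_j(P)\neq 0$, yielding the required nonsingularity of $\s$ along $\xx$.
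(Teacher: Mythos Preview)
Your proof follows essentially the same route as the paper's: both directions hinge on the Garcia--Voloch type characterization of nonclassicality (the paper cites \cite[Theorem~1]{GV1}, you cite the closely related \cite[Theorem~7.65]{HKT}), together with Euler's relation for the forward implication and the construction of the auxiliary hypersurface $\sum Z_i^{p^r}X_i=0$, followed by passage to an irreducible component, for the converse.

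One comment on your final paragraph: your attempt to upgrade to the full inclusion $\xx\subset\s_{\operatorname{reg}}$ is not quite complete, since clearing common \emph{polynomial} factors of the $Z_i$ does not prevent them from having common zeros along $\xx$ (for instance $X_0$ and $X_1$ share no factor yet vanish simultaneously on any curve through $(0:0:\ast:\cdots:\ast)$). The paper's own proof does not address this point either; what it implicitly yields, via the relation $U_i^{p^r}\equiv F_iL\bmod F$ and the fact that $L$ cannot vanish identically on $\xx$, is only the weaker statement $\xx\not\subset\operatorname{Sing}(\s)$, which is precisely what matches hypothesis~(i) of the forward direction and suffices for the Gauss map to be defined generically on $\xx$.
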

\begin{proof}
Consider a hypersurface $\s:F=0$ as in the statement and  let $\xx \subset \s$ be a curve satisfying conditions i) and ii). Assume that $\xx$ is defined by the coordinate functions $y_0,\ldots,y_n \in \kk(\xx)$. 
Since $\kk$ is perfect,  there exist $u_0,\ldots,u_n \in \kk(\xx)$ and $r>0$ such that $(F_0:\cdots:F_n)\vert_{\xx}=(u_0^{p^r}:\cdots:u_n^{p^r})$. In particular, $F_i(y_0,\ldots,y_n)=zu_i^{p^r}$ for some $z \in \kk(\xx)$ when we see $F_i$ as a homogeneous polynomial.
If $x_0,\ldots,x_n \in \kk(\s)$ are the coordinate functions of $\s$, we have
\begin{equation}\label{eq1}
\sum_{i=0}^{n}F_ix_i=0.
\end{equation}

 From $F(y_0,\ldots,y_n)=0$ and \eqref{eq1} we have that 
 $$
0=\sum_{i=0}^{n}F_i(y_0,\ldots,y_n)X_i(y_0,\ldots,y_n), 
 $$
 which gives
 \begin{equation}\label{eq2}
\sum_{i=0}^{n}u_i^{p^r}y_i =0\in \kk(\xx).
\end{equation}
 Since $p>n$, for every $1\leq m\leq n$ one has $D_\zeta^{(m)}(u_i^{p^{r}})=0$, where $\zeta \in \kk(\xx)$ is a separating element. Thus, from \eqref{eq2}, we obtain
 $$
 \sum_{i=0}^{n}u_i^{p^r}D_\zeta^{(m)}y_i =0, \ \ \  \text{ for } \ \ m=0,1,\ldots,n.
 $$
 Therefore, 
 $$\det\left(D_\zeta^{(m)}y_j\right)_{0 \leq m,i \leq n} = 0;$$ in other words, $\xx$ is nonclassical.  We point out that that this is essentially the ``if'' part of the proof of  \cite[Theorem 1]{GV1}) .

Conversely, assume that $\kk$ is algebraically closed and that $\xx \subset \p^n$ is a nonclassical curve defined by coordinate functions $y_0,\ldots,y_n \in \kk(\xx)$. Let $(\varepsilon_0,\ldots,\varepsilon_n)$ be the order sequence of $\xx$. By the $p$-adic criterion, see e.g. \cite[Lemma 2]{GV1}, we have that $p$ divides $\varepsilon_i$ for some $i$. In particular, we have $\varepsilon_n>n>p$. Thus $$
\det\left(D_\zeta^{(m_i)}y_j\right)_{0 \leq i,j \leq n} = 0
$$ for every separating element $\zeta \in \kk(\xx)$ and for every sequence of integers $0\leq m_0<m_1<\cdots<m_n<p$. Therefore, again by  \cite[Theorem 1]{GV1}, there exist $u_0,\ldots,u_n \in \kk(\xx)$ with $u_i \neq 0$ for at least one $i$ and $r>0$ such that \eqref{eq2} holds. Note that, after clearing the denominators, we may assume that each $u_i$ is a polynomial on the functions $y_j$. For each $i$, let $U_i \in \kk[X_0,\ldots,X_n]$ such that $u_i$ is the projection of $U_i$ in $\kk(\xx)$, and set
$$
\mathcal{V}: \sum_{i=0}^{n}U_i^{p^r}X_i=0.
$$
Since $\xx$ vanishes on the defining polynomial of $\mathcal{V}$, we have that $\xx$ is on a component $\s$ of $\mathcal{V}$, and such a component must be nonlinear as $\xx$ is nondegenerate. Further, $\xx \not\subset \operatorname{Sing}(\s)$ as $u_i \neq 0$ for some $i$. Denote by $F$ the polynomial defining $\s$, and let $L \in \kk[X_0,\ldots,X_n]$ such that $\sum\limits_{i=0}^{n}U_i^{p^r}X_i=FL$. Then,
$$
U_i^{p^r} \equiv F_i L \mod F,
$$
which gives that the Gauss map of $\s$ is $\Gamma=(U_0^{p^r}:\cdots:U_n^{p^r})$.
\end{proof}

\begin{rem}
    Let $\xx$ be an $\mathbb{F}_{q^2}$-maximal curve. Then, $\xx$ is equipped with a $g^r_{q+1}$, called the Frobenius linear series, for which $\xx$ is both nonclassical and $\mathbb{F}_{q^2}$-Frobenius nonclassical (see, for instance, \cite[Lemma 2.1 and 2.2]{KT}). The main result in \cite{KT}, usually referred to as "the Natural Embedding Theorem", states that the aforementioned $g^r_{q+1}$ gives rise to an embedding of $\xx$ into a nondegenerate Hermitian variety $\mathcal{H}$ of $\p^r(\fqc)$. Interestingly, $\mathcal{H}$ is a nonsingular, $\mathbb{F}_{q^2}$-Frobenius nonclassical hypersurface with a Gauss map admitting $p$-powers as coordinate functions. Thus, Proposition \ref{class} can be thought of as a weaker analog to the Natural Embedding Theorem for nonclassical curves. 
\end{rem}

We emphasize here that the restriction of an inseparable morphism of an algebraic variety to a subvariety is not necessarily inseparable. For instance, consider $\s=\p^2(\fqc)$ and $\xx:Y=0$. Then $\phi=(1:x:y^p):\p^2(\fqc) \lra \p^2(\fqc)$ is an inseparable morphism, but $\phi \mid_\xx$ is separable. The following example is suitable to our situation.

\begin{ex}\label{resep}
Consider $\s \subset \p^3(\fqc)$ defined over $\fq$ by $\s:X^{p+1}+Y^{p+1}+Z^2W^{p-1}+W^{p+1}=0$, where $p>3$ is the characteristic of $\fq$. Then the Gauss map of $\s$ is $\Gamma=(x^p:y^p:2z:1-z^2)$, where $x,y,z$ are the residues of $X/W,Y/W,Z/W$ in $\fq(\s)$ respectively. Note that $\fq(\s^\prime)=\fq(x^p,y^p,z)$, whence $\Gamma$ is inseparable. 

Now consider the absolutely irreducible plane curve $\cc$ defined over $\fq$ by $f(X,Y)=X^{p+1}+X^4+1+Y^{p+1}$. Then $\fq(\cc)=\fq(\xi,\zeta)$, where $f(\xi,\zeta)=0$. Let $\xx=\phi(\tilde{\cc})$, where $\phi=(\xi:\zeta:\xi^2:1)$ and $\tilde{\cc}$ is a normalization of $\cc$. Then $\xx$ is defined over $\fq$ and $\fq(\xx)=\fq(\cc)$. Note that  $\xx \subset \s \cap \mathcal{Q}$, where $\mathcal{Q}:Z=X^2$, and $\xx$ is clearly nondegenerate.  Since $\Gamma \mid_\xx=(\xi^p:\zeta^p:2\xi^2:1-\xi^4)$, we have that $\Gamma \mid_\xx$ is separable. Moreover,  $\xi$ is a separating variable of $\fq(\xx)$ and one can check directly that $D^{(3)}_\xi \zeta=-4\xi/\zeta^p$. Therefore, 
$$
\left |
\begin{array}{cccc}
1& \xi & \zeta & \xi^2 \\ [2mm]
0& 1  & D^{(1)}_\xi \zeta & 2\xi \\ [2mm]
0 & 0 & D^{(2)}_\xi \zeta & 1 \\ [2mm]
0 & 0 & D^{(3)}_\xi \zeta & 0
\end{array}
\right |=\frac{4\xi}{\zeta^p} \neq 0,
$$
that is, $\xx$ is a classical curve.
\end{ex}

In the next result, we show that the strict Gauss map of an $\fq$-Frobenius nonclassical hypersurface is inseparable. Although such a result appears in the proof of \cite[Theorem 4.5]{ADL}, in Proposition \ref{insep} below we provide an alternative proof.

\begin{prop}\label{insep}
Let $\s \subset \p^n$ be an absolutely irreducible  nondegenerate hypersurface  defined over $\fq$ such that its Gauss map is generically finite. If $\s$ is $\fq$-Frobenius nonclassical, then its Gauss map is inseparable. 
\end{prop}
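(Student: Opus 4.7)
My plan is to establish inseparability of $\Gamma$ by exhibiting a nontrivial element in the kernel of its differential at a generic point of $\s$. Since $\Gamma$ is assumed generically finite, $\dim \s' = \dim \s = n-1$, and such a kernel element forces $d\Gamma$ to fail surjectivity generically, which is equivalent to inseparability. The construction of the kernel element will rely on a ``second Eulerian'' identity, extracted from the Frobenius nonclassicality condition by polynomial differentiation.

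The first step is to lift the function-field identity $\sum_{i=0}^{n} F_i x_i^q = 0$ to the polynomial identity
$$
\sum_{i=0}^{n} F_i X_i^q = F \cdot G(X_0,\ldots,X_n)
$$
for some $G \in \fq[X_0,\ldots,X_n]$, using irreducibility of $F$. Differentiating with respect to $X_k$ and noting that $\partial X_i^q/\partial X_k = 0$ in characteristic $p$, one gets $\sum_i F_{ik} X_i^q = F_k G + F G_k$, which restricted to $\s$ yields
$$
\sum_{i=0}^{n} F_{ik}\, x_i^q = G(x)\, F_k, \qquad k = 0, \ldots, n,
$$
or compactly $H(F)(x)\cdot (x^q)^T = G(x)\,\nabla F^T$ on $\s$, where $H(F) = (F_{ij})$ is the Hessian matrix.

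Pairing this with the classical Euler identity $H(F)(x)\cdot x^T = (d-1)\,\nabla F^T$ (valid under hypothesis (i) $p \nmid d$), I conclude that both $x$ and $x^q$ are sent by $H(F)$ into the line $\langle \nabla F\rangle$, and both lie in $\ker \nabla F$ (the second inclusion being the geometric content of Frobenius nonclassicality). Since $\Gamma = (F_0:\cdots:F_n)$, the differential $d\Gamma_P$ at a nonsingular point $P$ is induced by the map $w \mapsto H(F)(P)\cdot w$ modulo $\langle \nabla F(P)\rangle$, acting on $T_P\s \cong \ker \nabla F(P)/\langle x(P)\rangle$. Therefore the class $[x^q(P)]$ lies in $\ker d\Gamma_P$. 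It is nonzero in $T_P\s$ precisely when $x^q(P) \notin \langle x(P)\rangle$, i.e., when $P \neq \Phi_q(P)$, i.e., when $P$ is not $\fq$-rational. Since $\s$ is positive-dimensional, such points are Zariski-dense, so $d\Gamma_P$ has nontrivial kernel at a generic $P$, and $\Gamma$ is inseparable.

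The main bookkeeping subtlety is the quotient $T_P\s \cong \ker \nabla F(P)/\langle x(P)\rangle$: Euler's identity already places $x$ in the ``kernel'' of $H(F) \bmod \nabla F$, but this contribution is quotiented away, so the crux of the argument is that Frobenius nonclassicality supplies the genuinely new direction $[x^q]$. This direction is nontrivial precisely away from $\s(\fq)$, a generic condition since $\dim\s\geq 1$, and it is the Frobenius nonclassicality (via the differentiated polynomial identity) that places it in the kernel of the induced map.
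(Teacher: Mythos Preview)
Your proof is correct and genuinely different from the paper's. The paper argues by contradiction through projective duality: assuming $\Gamma$ separable, the Monge--Segre--Wallace criterion forces $\s$ to be reflexive, hence $\s''=\s$ and the Gauss map $\Gamma'$ of $\s'$ is separable and birational; but the Frobenius nonclassicality identity then yields $\Gamma'=(x_0^q:\cdots:x_n^q)=\Phi_q$, which is purely inseparable, a contradiction. Your argument is instead a direct computation of $\ker d\Gamma_P$: differentiating the lifted identity $\sum_i F_i X_i^q = FG$ produces the ``second Euler'' relation $H(F)\cdot x^q \equiv G\,\nabla F$ on $\s$, which together with $\nabla F\cdot x^q=0$ exhibits the class $[\Phi_q(P)]\in T_P\s$ as a nontrivial kernel element whenever $P\notin\s(\fq)$.

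Your route is more elementary in that it avoids conormal varieties and the Monge--Segre--Wallace criterion, and it gives a transparent geometric explanation for the inseparability: the Frobenius direction $\Phi_q(P)$ is always killed by $d\Gamma_P$. The paper's route, by contrast, yields additional structural information about $\s'$ and $\Gamma'$ along the way, placing the statement in the broader duality framework exploited later in Section~5. It is worth noting that the paper itself remarks that the result already appears inside the proof of \cite[Theorem~4.5]{ADL}, and presents its duality argument expressly as an \emph{alternative} proof; your Hessian computation is likely closer in spirit to the original ADL argument.
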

\begin{proof}
Assume, on the contrary, that the Gauss map $\Gamma$ of $\s$ is separable. Since $\s$ is a hypersurface, the conormal variety of $\s$ is
$$
C(\s)=\overline{\{(P,T_P(\s)) \ :  \ P \in \s^{\operatorname{reg}}\}},
$$
and  $\Gamma=\pi^{\prime} \circ \pi^{-1}$, where $\pi:C(\s) \lra S$ and $\pi^\prime:C(\s) \lra S^\prime$ are the first and second projection, respectively. Since $\pi$ is separable, the separability of $\Gamma$ implies that $\pi^\prime$ is separable. By the Monge-Segre-Wallace criterion (see \cite{Wa}), it follows that $\s$ is reflexive, that is, $C(\s)=C(\s^\prime)$. Thus, $\s^{\prime \prime}=\s$ and $\s^\prime$ is reflexive as well. Then $\Gamma^\prime$ (the Gauss map of $S^\prime$) is separable and, since it is generically finite, it is birational. However, since $\s$ is $\fq$-Frobenius nonclassical, we have that
$$
\sum_{i=0}^{n}F_i x_i^q=0
$$
holds in the function field of $\s$. Note that $F_i \neq 0$ for all $i$, otherwise, $\s^\prime$ would be a hyperplane, and then $\s=\s^{\prime \prime}$ would be a hyperplane as well, which is a contradiction. Since $\s^\prime$ is a hypersurface (otherwise, we would not have $\s^{\prime \prime}=\s$), there exists an absolutely irreducible homogeneous polynomial $G \in \fq[T_0,\ldots,T_n]$ such that $\s^\prime:G=0$. Thus, one may see $G$ as a polynomial in $F_0,\ldots,F_n$ and the $X_i$ as quotients of homogeneous polynomials in $\fq[F_0,\ldots,F_n]$, and then
$$
\sum_{i=0}^{n}F_i X_i^q \equiv 0 \mod G. 
$$
Hence, there exist a quotient $L$ of homogeneous polynomials in $ \fq[T_0,\ldots,T_n]$ such that for all $i=0,\ldots,n$, we have 
$$
X_i^q \equiv L \cdot \frac{\partial G}{\partial F_i} \mod G. 
$$
This means that $\Gamma^\prime =(x_0^q:\cdots: x_n^q)$. Therefore, $\Gamma^{\prime}$ is purely inseparable, which is a contradiction. This finishes the proof.
\end{proof}


\begin{cor}
Let $\s \subset \p^n$ be an absolutely irreducible  hypersurface  defined over $\fq$. If $\s$ is $\fq$-Frobenius nonclassical and $\dim \s^\prime=\dim \s$, then $\s$ is nonreflexive. 
\end{cor}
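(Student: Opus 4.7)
The plan is to derive the corollary almost directly from Proposition \ref{insep} together with the Monge-Segre-Wallace criterion already invoked in the paper. The hypothesis $\dim \s^\prime = \dim \s$ is precisely what is needed to convert "Gauss map inseparable" into "conormal map inseparable," and from there into nonreflexivity.

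First, I would note that since $\s$ is a hypersurface, $\dim \s = n-1$, and the condition $\dim \s^\prime = \dim \s$ forces the Gauss map $\Gamma : \s \lra \s^\prime$ to have generically finite fibers, i.e., $\Gamma$ is generically finite. This is exactly the hypothesis of Proposition \ref{insep}, so we may apply it: combined with the $\fq$-Frobenius nonclassicality of $\s$, we conclude that $\Gamma$ is inseparable.

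Next, I would recall from the discussion preceding the corollary that for a hypersurface the strict dual and the dual variety coincide, so $\s^* = \s^\prime$, and the conormal variety takes the explicit form
$$
C(\s) = \overline{\{(P, T_P(\s)) \ : \ P \in \s_{\operatorname{reg}}\}}.
$$
The first projection $\pi : C(\s) \lra \s$ is then a birational isomorphism, since the tangent hyperplane at a nonsingular point is uniquely determined by the point. Hence $\pi$ is separable with separable inverse (on the nonsingular locus), and the factorization $\Gamma = \pi^\prime \circ \pi^{-1}$ shows that the inseparability of $\Gamma$ transfers to the conormal map $\pi^\prime : C(\s) \lra \s^*$. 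In other words, $\pi^\prime$ is inseparable.

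Finally, I would invoke the Monge-Segre-Wallace criterion stated in Section 2: $\s$ is reflexive if and only if $\pi^\prime$ is separable. Since we have just shown $\pi^\prime$ is inseparable, $\s$ is nonreflexive, which is the claim. I do not expect a real obstacle here; the only thing to keep track of is that the assumption $\dim \s^\prime = \dim \s$ is used exactly once, to legitimize the application of Proposition \ref{insep}, and that the identification $\s^* = \s^\prime$ for hypersurfaces is needed to pass between the Gauss map and the conormal map in the Monge-Segre-Wallace statement.
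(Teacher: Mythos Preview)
Your proposal is correct and follows essentially the same argument as the paper: apply Proposition \ref{insep} (using $\dim \s^\prime = \dim \s$ to get generic finiteness of $\Gamma$), transfer the inseparability of $\Gamma$ to $\pi^\prime$ via the factorization $\Gamma = \pi^\prime \circ \pi^{-1}$ with $\pi$ separable, and conclude nonreflexivity by Monge--Segre--Wallace. The extra detail you supply (why $\pi$ is birational, and the identification $\s^* = \s^\prime$ for hypersurfaces) is helpful but does not deviate from the paper's route.
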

\begin{proof}
From Proposition \ref{insep}, the Gauss map $\Gamma$ of $\s$ is inseparable. Since $\Gamma=\pi^{\prime} \circ \pi^{-1}$ and $\pi$ is separable, we conclude that the conormal map $\pi^{\prime}$ is inseparable. Thus, by the Monge-Segre-Wallace criterion,  $\s$ is nonreflexive.
\end{proof}

The main result of this section is the following.

\begin{thm}\label{fncl}
Let $\s \subset \p^n$ be an absolutely irreducible hypersurface  defined over $\fq$. Assume that  $p>n$ and  $\dim(\s_{\Phi_q})>0$. Then every curve $\xx \subset \s_{\Phi_q}$ defined over $\fq$ such that the restriction of the Gauss map of $\s$ to $\xx$ is inseparable is $\fq$-Frobenius nonclassical.
\end{thm}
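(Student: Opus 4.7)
The plan is to show that, for the candidate classical $\fq$-Frobenius sequence $(\nu_0,\ldots,\nu_{n-1})=(0,1,\ldots,n-1)$, the $(n+1)\times(n+1)$ determinant appearing in \eqref{fncl} is forced to vanish; by minimality of the order sequence this is exactly $\fq$-Frobenius nonclassicality of $\xx$. The core idea parallels the proof of Proposition~\ref{class}: combine the Gauss-map inseparability with a polynomial identity on $\xx$ to exhibit an explicit column relation for the matrix. Here, however, the stronger hypothesis $\xx\subset\s_{\Phi_q}$ lets me include the Frobenius row $(y_j^q)$ in the relation alongside those coming from Euler's identity.

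Let $y_0,\ldots,y_n\in\fq(\xx)$ be coordinate functions of $\xx$ and fix a separating element $\zeta\in\fq(\xx)$. The inclusion $\xx\subset\s_{\Phi_q}$ translates into the functional identity $\sum_i F_i\vert_\xx\,y_i^q=0$, while the inseparability of $\Gamma\vert_\xx$ produces $r\ge 1$ and $u_0,\ldots,u_n\in\fq(\xx)$ with $(F_0\vert_\xx:\cdots:F_n\vert_\xx)=(u_0^{p^r}:\cdots:u_n^{p^r})$; absorbing the common scalar leaves
$$\sum_{i=0}^n u_i^{p^r}\,y_i^q \;=\; 0.$$
Euler's identity for the homogeneous $F$, restricted to $\xx$ (legitimate as $\deg\s\not\equiv 0\bmod p$ is a standing hypothesis), yields $\sum F_i\vert_\xx y_i=0$, hence $\sum u_i^{p^r} y_i=0$. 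Differentiating this with $D_\zeta^{(k)}$ and using the standard identity $D_\zeta^{(a)}(g^{p^r})=0$ for $0<a<p^r$, every Leibniz cross-term vanishes and one obtains
$$\sum_{i=0}^n u_i^{p^r}\,D_\zeta^{(k)}y_i \;=\; 0 \qquad \text{for all } 0\le k<p^r.$$

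Provided $p^r\ge n$, the $n$ identities just derived, together with the Frobenius relation above, say precisely that the column vector $(u_0^{p^r},\ldots,u_n^{p^r})^{T}$ annihilates every row of the matrix in \eqref{fncl} associated with $(\nu_0,\ldots,\nu_{n-1})=(0,1,\ldots,n-1)$. Since $\xx$ meets the smooth locus $\s_{\operatorname{reg}}$, the restricted Gauss map $\Gamma\vert_\xx$ is generically defined and nontrivial, so this column vector is nonzero; the determinant therefore vanishes and $\xx$ is $\fq$-Frobenius nonclassical.

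The principal obstacle is enforcing $p^r\ge n$, failing which the derivatives of the Euler relation do not yield enough independent row-killings. The simplest way to secure this is to invoke a running hypothesis such as $p>n$, so that even the minimal value $r=1$ suffices. If $p^r<n$, one would have to extract further relations by differentiating the Frobenius identity $\sum u_i^{p^r}y_i^q=0$ and carefully tracking the much sparser Leibniz terms that survive (those where the order of differentiation is simultaneously compatible with $p^r$ and with $q=p^h$) in order to recover the missing column relations.
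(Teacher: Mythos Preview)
Your argument is correct and takes a more direct route than the paper's. Both proofs start from the same two identities on $\xx$---the Euler relation $\sum u_i^{p^r} y_i = 0$ and the Frobenius-locus relation $\sum u_i^{p^r} y_i^q = 0$---but diverge from there. The paper first appeals to Proposition~\ref{class} (hence to \cite[Theorem~1]{GV1}) to obtain nonclassicality of $\xx$, then splits into cases according to whether $\varepsilon_{n-1} > n-1$; in the main case $\varepsilon_{n-1} = n-1$ it invokes \cite[Theorem~7.65]{HKT} to identify the osculating hyperplane $H_P(\xx)$ explicitly, observes that $H_P(\xx) = T_P(\s)$ generically, and concludes via \cite[Corollary~1.3 and Proposition~2.1]{SV}. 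Your approach bypasses all of this: the Hasse-derivative vanishing $D_\zeta^{(a)}(u_i^{p^r}) = 0$ for $0 < a < p^r$ lets you differentiate the Euler relation directly, producing enough row-annihilations (together with the Frobenius row) to kill the determinant for $(0,1,\ldots,n-1)$ outright, with no case split and no external citations beyond the standing hypothesis $p > n$---which the paper also uses implicitly, via Proposition~\ref{class} and \cite[Theorem~7.65]{HKT}. What the paper's route buys is the explicit geometric statement that the osculating hyperplane of $\xx$ coincides with the tangent hyperplane of $\s$ at a generic point, which is of independent interest. One minor remark: your parenthetical invoking $\deg \s \not\equiv 0 \bmod p$ for Euler's identity is unnecessary, since $F$ vanishes identically on $\xx$ and hence $\sum y_i F_i\vert_\xx = 0$ holds regardless of the degree.
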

\begin{proof}
 Set $\s:F=0$. Let $\xx \subset \s$ be a curve defined over $\fq$ and contained in $\s_{\Phi_q}$. Denote by $(\varepsilon_0,\varepsilon_1,\ldots,\varepsilon_n)$ its order sequence. Since $\s_{\Phi_q}$ does not contain singular points of $\s$, $\xx$  is nonclassical by Proposition \ref{class}. If $\varepsilon_{n-1}>n-1$, then $\xx$ is $\fq$-Frobenius nonclassical, since the $\fq$-Frobenius order-sequence is a subsequence of the order sequence. Assume that $\varepsilon_{n-1}=n-1$; in particular, $\varepsilon_n>n$. Then, if   $x_0,\ldots,x_n \in \fq(\s)$ denote the coordinate functions of $\s$, relation \eqref{eq1} holds with all $F_i$ defined over $\fq$. As in the proof of Proposition \ref{class}, if $y_0,\ldots,y_n \in \fq(\xx)$ are the coordinate functions of $\xx$, then \eqref{eq2} holds as well, where $\Gamma \mid_\xx=(u_0^{p^r}:\cdots:u_n^{p^r})$. Note that, up to the replacement of $r$ by a bigger number, we may assume that at least one of the $u_i$ is a separating element of $\fq(\xx)$.  From \cite[Theorem 7.65]{HKT}, we have $\varepsilon_n=p^r$ and the osculating hyperplane at a general point $P \in \xx$ is given by
$$
H_P(\xx):\sum_{i=0}^{n}\left(u_i^{p^r}(P) \right)X_i=0.
$$
However, from \eqref{eq1}, $H_P(\xx)=T_P(\s)$ for a general $P \in \xx$, that is, the osculating hyperplane to $\xx$ at $P$ coincides with the tangent hyperplane to $\s$ at $P$.  Since  $\Phi_q(P) \in T_P(\s)$ for a general point $P \in \s_{\Phi_q}$, we have that $\xx \subset \s_{\Phi_q}$ implies that $\Phi_q(P) \in T_P(\s)=H_P(\xx)$ for a general point $P \in \xx$. Therefore, by \cite[Corollary 1.3 and Proposition 2.1]{SV}, $\xx$ is $\fq$-Frobenius nonclassical.
\end{proof}

Theorem \ref{fncl} then immediately yields the following result.

\begin{thm}\label{fnc}
Let $\s \subset \p^n$ be an absolutely irreducible $\fq$-Frobenius nonclassical hypersurface. If $p>n$, then every curve  $\xx \subset \s$ defined over $\fq$ not contained in the singular locus of $\s$ such that the restriction of the Gauss map of $S$ to $\xx$ is inseparable is $\fq$-Frobenius nonclassical.
\end{thm}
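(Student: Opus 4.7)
The plan is to derive this as an immediate corollary of Theorem \ref{fncl} by verifying its two hypotheses: that $\dim(\s_{\Phi_q})>0$, and that $\xx \subset \s_{\Phi_q}$.

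First I would unpack the hypothesis that $\s$ is $\fq$-Frobenius nonclassical: by the very definition given in the Preliminaries, this means $\s_{\Phi_q}=\s_{\operatorname{reg}}$. Since $\s$ is an absolutely irreducible hypersurface in $\p^n$ (with $n\ge 2$), its nonsingular locus is dense of dimension $n-1\ge 1$, so the condition $\dim(\s_{\Phi_q})>0$ is automatic, and Theorem \ref{fncl} becomes applicable as soon as we can check the inclusion $\xx\subset \s_{\Phi_q}$.

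Next I would argue the inclusion. By assumption $\xx\subset \s$ is not contained in the singular locus $\s_{\operatorname{sing}}$, so the intersection $\xx\cap \s_{\operatorname{sing}}$ is a proper closed subvariety of the irreducible curve $\xx$, hence consists of at most finitely many points. Consequently $\xx\setminus (\xx\cap \s_{\operatorname{sing}})$ is a dense open subset of $\xx$ contained in $\s_{\operatorname{reg}}=\s_{\Phi_q}$. Because the conclusion of Theorem \ref{fncl} only uses information at a general point of $\xx$ (the osculating/tangent hyperplane identification and the Frobenius condition are checked generically), removing finitely many points from $\xx$ does not affect its applicability, so effectively $\xx\subset \s_{\Phi_q}$ in the sense needed.

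With both hypotheses verified, Theorem \ref{fncl}, together with the standing assumption that the restriction of the Gauss map of $\s$ to $\xx$ is inseparable, yields that $\xx$ is $\fq$-Frobenius nonclassical. There is no real obstacle here; the only point deserving care is the subtle matter that the set-theoretic definition $\s_{\Phi_q}\subset \s_{\operatorname{reg}}$ formally excludes singular points, so one must briefly justify why having a dense open of $\xx$ inside $\s_{\Phi_q}$ suffices to invoke Theorem \ref{fncl}.
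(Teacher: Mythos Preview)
Your proposal is correct and matches the paper's approach exactly: the paper presents Theorem~\ref{fnc} as an immediate consequence of Theorem~\ref{fncl} without further argument, and your write-up simply spells out the verification that $\s_{\Phi_q}=\s_{\operatorname{reg}}$ has positive dimension and that $\xx$ lies generically in it. Your added care about the finitely many singular points of $\s$ on $\xx$ is a reasonable clarification, though the paper does not dwell on it.
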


We now provide a number of examples that highlight the different implications of the results presented so far.

\begin{ex}
Let $\s \subset \p^3$ defined over $\fq$ by $F(W,X,Y,Z)=0$, where $F(W,X,Y,Z)=(Z-W)^qW-X^qY-Y^qX+ZW^q$ and $p>3$. Then one can check that $\s$ is $\fqs$-Frobenius nonclassical with strict Gauss map $\Gamma=(-Y^q:-X^q:W^q:(Z-W)^q)$. By Theorem \ref{fnc}, given a rational map $\psi:\p^2 \dashrightarrow \p^3 $, any irreducible nondegenerate component of $\overline{\psi(\p^2)}\cap \s$ defined over $\fqs$ is an $\fqs$-Frobenius nonclassical curve. For instance, if $\psi=(W^2: WX:WY:XY)$, then   $\overline{\psi(\p^2)}\cap \s$ has a nondegenerate component $\xx$ that is $\fq$-birationally equivalent to the Artin-Mumford curve defined by the affine equation $\mathcal{M}:(x^q-x)(y^q-y)=1$. Further,  such a model $\xx \subset \p^3$ of the Artin-Mumford curve is $\fqs$-Frobenius nonclassical, as presented in \cite[Subsection 5.1]{Ar2}. It should be noted that the number of $\fqs$-rational points of $\xx$ is $N_2=q^2(q-1)+2q$, its order sequence is $(0,1,2,q)$ and its degree is $2q$,  see \cite[Subsection 5.1]{Ar2}. The St\"ohr-Voloch bound for $\fqs$-Frobenius classical curves in $\p^3$ with these parameters is $N_2 \leq 2(q^2+3q-3)$. Therefore, the number of $\fqs$-rational points of $\xx$ exceeds this bound by far.
\end{ex}

\begin{ex}\label{fermat}
Set $n=(q^\ell-1)/(q-1)$ and $M={s+2 \choose 2}-1$ for some $\ell>1$, $s \in \{2,\ldots,n-1\}$ and  consider the Fermat hypersurface $\s_n \subset \p^M$ defined over $\fq$ with $\operatorname{char}(\fq)=p>M$ by $F_s=0$, where
$$
F_s=\sum_{i=0}^{M}a_i X_i^n.
$$ 
Then $\frac{\partial F_s}{\partial X_i}=(na_i X_i^{(q^{\ell-1}-1)/(q-1)})^q$, and 
$$
n\sum_{i=0}^{M}a_i X_i^n=\sum_{i=0}^{M}\left(na_i X_i^{\frac{q^{\ell-1}-1}{q-1}}\right)^{q}X_i=\sum_{i=0}^{M}\frac{\partial F_s}{\partial X_i} X_i.
$$ 
Therefore,
$$
\sum_{i=0}^{M}\frac{\partial F_s}{\partial X_i} X_i^{q^\ell}=n\left(\sum_{i=0}^{M}a_i X_i^{\frac{q^\ell-1}{q-1}}\right)^q=n\left(\sum_{i=0}^{M}a_i X_i^{n}\right)^q \equiv 0 \mod F_s,
$$
that is, $\s_n$ is $\F_{q^\ell}$-Frobenius nonclassical. Consider the Veronese embedding
$$
\phi_s=(Z^s: XZ^{s-1}:YZ^{s-1}:X^2Z^{s-2}: \cdots:X^iY^jZ^k:\cdots:Y^s):\p^2\hookrightarrow \p^M,
$$
where $i+j+k=s$. When the nondegenerate curve $\phi_s(\p^2)\cap \s_n$ is irreducible, then it corresponds to the $\F_{q^\ell}$-Frobenius nonclassical curve with respect to the linear system of all plane curves of degree $s$ appearing in \cite{AB}.
\end{ex}

The following example shows that we can have Frobenius nonclassical curves on Frobenius classical surfaces.

\begin{ex}\label{exfermat}
Let $\cc$ be the plane curve defined over $\F_5$ by the affine equation
$$
(x+y)^5y^5x-y(1+xy)^5-x^5(1+xy)(x+y)^{4}+(x+y)^5=0.
$$
One can check via Magma \cite{MAGMA} that $\cc$ is absolutely irreducible. Moreover, the point $P=(-1:0:1) \in \cc$ is such that the intersection multiplicity of $\cc$ with the tangent line to $\cc$ at $P$ is $2$, whence $\cc$ is a classical curve. Set
$$
z=\frac{-1-xy}{x+y} \in \F_5(x,y).
$$
Then we have 
\begin{equation}\label{inters1}
xy+yz+xz+1=0  
\end{equation}
and
\begin{equation}\label{inters2}
 xy^5+yz^5+x^5z+1=0. 
\end{equation}

If $\xx=\varphi(\tilde{\cc}) \subset \p^3(\overline{\F}_ 5)$ , where $\tilde{\cc}$ is a normalization of $\cc$ and $\varphi=(1:x:y:z)$, we have that $\xx$ is a model of $\cc$ (that is, $\xx$ is $\F_5$-birationally equivalent to $\tilde{C}$) and $\xx \subset \s$, where
$$
\s:XY^5+YZ^5+X^5Z+W^{6}=0.
$$
Note that $\s$ is smooth and its Gauss map is $\Gamma=(Y^5:Z^5:X^5:W^5)$, which is purely inseparable (here, the coordinates of $\Gamma$ are seen as functions in $\F_5(\s)$). Moreover,  $\s$ is $\F_5$-Frobenius classical, since $\s_{\Phi_5}$ is the intersection of $\s$ with the quadric surface
$$
\mathcal{Q}: XY+YZ+XZ+W^2=0.
$$
As a matter of fact, from \eqref{inters1} and \eqref{inters2}, we conclude that  $\xx\subset \s_{\Phi_5}$ . Equation \eqref{inters1} implies that $\xx$ is nondegenerate. Now we compute the order sequence of $\xx$. It is known that $\varepsilon_0=0$ and $\varepsilon_1=1$. We claim that $\varepsilon_2=2$. In fact, it can be checked that $x$ is a separating element in $\fq(\xx)$. If $\varepsilon_2>2$, equation  \eqref{inters2} gives that $D_x^{(2)}y=D_x^{(2)}z=0$.  However, if $D_x^{(2)}y=0$, then the plane curve $\cc$ is nonclassical, which is a contradiction. Hence $\varepsilon_2=2$. Finally, \eqref{inters2} implies that $\varepsilon_3=5$, and   \eqref{inters1} implies that $\Phi_5(P) \in H_P(\xx)=T_P(\s)$ for a general $P \in \xx$, which gives that the $\F_5$-Frobenius order sequence of $\xx$ is $(0,1,5)$, that is, $\xx$ is $\F_5$-Frobenius nonclassical.
\end{ex}

The next example illustrates how Frobenius nonclassical curves are a source of curves with many rational points.

\begin{ex}
Consider the curve $\ff$ defined over $\fq$, with $q=p^h$, by the affine equation $x^ny^n-x^n-y^n=1$. If $\varphi=(1:x:y:xy):\ff \dashrightarrow \p^3$, we have that $\xx:=\overline{\varphi(\ff)}$ is $\fq$-birationally equivalent to $\ff$ and lies on the smooth Fermat surface $\s$ defined by
$$
-X^n-Y^n+Z^n-W^n=0.
$$
If $\xx$ is $\fq$-Frobenius classical, then by \cite[Corollary 3.3]{BC} the number $N_q$ of the rational points on a nonsingular model of $\xx$ is bounded by
\begin{equation}\label{cotaleandro}
N_q \leq 2(n^2-2n)+\frac{2n(q+3)}{3}-\frac{2(n-2)(2n+r+s)}{3},
\end{equation}
where $r$ and $s$ are, respectively, the number of roots of $T^n-1$ and $T^n+1$ in $\fq$. Assume that $n=(q-1)/(p^r-1)$, where $r \mid h$. Then we obtain that $\s$ is $\fq$-Frobenius nonclassical with Gauss map 
$$
\Gamma= \left((-X^{\frac{p^{h-r}-1}{p^r-1}})^{p^r}: (-Y^{\frac{p^{h-r}-1}{p^r-1}})^{p^r}:(Z^{\frac{p^{h-r}-1}{p^r-1}})^{p^r}:(-W^{\frac{p^{h-r}-1}{p^r-1}})^{p^r}\right).
$$
Thus, $\Gamma \mid_{\xx}$ is inseparable. Hence, in this case $\xx$ is $\fq$-Frobenius nonclassical by Theorem \ref{fnc}. One can check that $D_x^{(2)}y \neq 0$, whence we deduce that the $\fq$-Frobenius order sequence of $\xx$ is $(0,1,p^r)$. Now, for $n=(q-1)/(p^r-1)$ we have
\begin{equation}\label{tl}
N_q=n^2(p^r-3)+4n,
\end{equation}
see \cite[Teorema 3.5.1]{Ro}. Note that \eqref{tl} exceeds bound \eqref{cotaleandro} for sufficiently large values of $q$. For instance, let $\ff:x^8y^8-x^8-y^8=1$ defined over $\F_{49}$. In this case we have $r=s=8$ and the right side of \eqref{cotaleandro} is $<245$. However, the exact number of $\F_{49}$ rational points of $\ff$ is $N_{49}=288$.
\end{ex}

\begin{rem}
In \cite{BN}, the authors provide a nice bound for the number of $\fq$-rational points on curves lying on $\fq$-Frobenius nonclassical surfaces in $\p^3$ (\cite[Theorem 2.5]{BN}), which can improve some known bounds in certain cases. Later, in \cite[Proposition 3.2]{BN} and in the paragraph immediately following it, it is claimed that the $\fq$-Frobenius nonclassical locus of surfaces in $\p^3$ cannot contain nondegenerate $\fq$-Frobenius nonclassical curves with $\nu_1=1$. However, there appears to have been an oversight in this result. Note that all the examples in this section contradict this statement.
\end{rem}

We end this section with a generalization of the idea behind Example \ref{fermat}. This idea  can be used to construct Frobenius nonclassical plane curves with respect to the linear systems of all plane curves of a given degree. As explained in \cite{AB}, although such curves are somewhat rare, their construction is desirable as it provides a potential source of curves with many points, see \cite{AB1,GV2,Gi}; and also \cite[Section 8.7]{HKT}. 

Let $s>0$ be an integer and set $M={s+2 \choose 2}-1$. Again, consider the Veronese embedding
$$
\phi_s=(Z^s: XZ^{s-1}:YZ^{s-1}:X^2Z^{s-2}: \cdots:X^iY^jZ^k:\cdots:Y^s):\p^2\hookrightarrow \p^M,
$$
where $i+j+k=s$. Since $\phi_s:\p^2 \lra \phi_s(\p^2)$ is an isomorphism of algebraic varieties, there exists a bijective correspondence between plane irreducible projective curves and irreducible projective curves lying on $ \phi_s(\p^2)$.  

\begin{thm}\label{wrts}
With notation as above, let $p>M$ and let $\s \subset \p^M(\fqc)$ be an absolutely irreducible $\fq$-Frobenius nonclassical hypersurface defined by $F(X_0,\ldots,X_M)=0$ with Gauss map $\Gamma$.  Assume that there exists an absolutely irreducible component $\xx$ of $\s \cap \phi_s(\p^2)$ defined over $\fq$ such that
\begin{itemize}
\item $\xx$ is nondegenerate;
\item $\xx$ is not contained in the singular locus of $\s$;
\item $\Gamma\mid_{\xx}$ is inseparable.
\end{itemize}
Set $\ff:=\phi_s^{-1}(\xx) \subset \p^2(\fqc)$. Then
\begin{itemize}
\item [(i)] $\ff$ is defined over $\fq$ by an absolutely irreducible factor of
$$
F(Z^s, XZ^{s-1},YZ^{s-1},X^2Z^{s-2}, \ldots,X^iY^jZ^k, \dots,Y^s) \in \fq[X,Y,Z];
$$
\item [(ii)] If $\deg{\ff}>s$, then $\ff$ is $\fq$-Frobenius nonclassical with respect to the linear system of all plane curves of degree $s$.
\end{itemize}
\end{thm}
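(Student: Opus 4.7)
My plan is to pass between the plane curve $\ff$ and the curve $\xx\subset\p^M$ through the Veronese isomorphism $\phi_s$, and then invoke Theorem~\ref{fnc}. For part (i), since $\phi_s:\p^2\to\phi_s(\p^2)$ is an $\fq$-isomorphism, the preimage $\ff=\phi_s^{-1}(\xx)$ is absolutely irreducible and defined over $\fq$. The inclusion $\xx\subset\s\cap\phi_s(\p^2)$ pulls back to $\ff\subset\phi_s^{-1}(\s)$, which in $\p^2$-coordinates is cut out by $F(Z^s,XZ^{s-1},YZ^{s-1},\ldots,Y^s)\in\fq[X,Y,Z]$. Because $\xx$ is an absolutely irreducible component of $\s\cap\phi_s(\p^2)$, the curve $\ff$ must be an absolutely irreducible component of this plane curve, so it is defined by an absolutely irreducible factor of the above polynomial over $\fq$.

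For part (ii), my plan is to apply Theorem~\ref{fnc} directly to $\xx\subset\s$: its hypotheses coincide with our assumptions ($\s$ is $\fq$-Frobenius nonclassical, $\xx$ is defined over $\fq$, $\xx$ is not contained in the singular locus of $\s$, and $\Gamma\vert_\xx$ is inseparable). This yields that $\xx\subset\p^M$ is $\fq$-Frobenius nonclassical with respect to its complete hyperplane linear series.

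The remaining step is to translate this into a statement about $\ff$ and the linear system of all plane curves of degree $s$. Pulling back hyperplanes in $\p^M$ through $\phi_s$ produces precisely this linear system on $\p^2$, so the two linear series on $\xx$ and on $\ff$ correspond divisor by divisor. The hypothesis $\deg\ff>s$ enters here essentially: if a nonzero polynomial $g$ of degree $s$ vanished on $\ff$, then the absolutely irreducible defining polynomial of $\ff$, of degree strictly greater than $s$, would have to divide $g$, a contradiction. Hence $|\OO_{\p^2}(s)|$ restricted to $\ff$ has the full projective dimension $M$, matching the hyperplane series on $\xx$; the $\fq$-Frobenius order sequence of $\ff$ with respect to the degree-$s$ plane curves therefore coincides with that of $\xx\subset\p^M$, giving the desired nonclassicality. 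I do not foresee a substantive obstacle; the only technical subtlety is this dimension-matching argument and, in particular, the use of $\deg\ff>s$ to ensure that the linear series on $\ff$ does not drop in dimension.
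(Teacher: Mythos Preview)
Your proposal is correct and follows essentially the same route as the paper's proof: both arguments use that $\phi_s$ is an $\fq$-isomorphism to settle (i), and for (ii) both apply Theorem~\ref{fnc} to conclude that $\xx$ is $\fq$-Frobenius nonclassical and then transfer this to $\ff$ via the identification of the hyperplane series on $\xx$ with the linear system of degree-$s$ plane curves on $\ff$. Your discussion of why $\deg\ff>s$ guarantees the linear series has full dimension $M$ is slightly more explicit than the paper's citation of standard theory, but the substance is identical.
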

\begin{proof}
Let $\ff=\phi_s^{-1}(\xx)$ as in the statement. Since the Frobenius map $\Phi_q$ commutes with $\phi_s$, we have that $\ff$ is defined over $\fq$. The irreducibility of $\ff$ follows from the fact that $\phi_s$ is an isomorphism, and hence it preserves irreducible algebraic subsets. Further, if $\fq(\ff)=\fq(x,y)$ with $h(x,y)=0$ for some absolutely irreducible $h \in \fq[X,Y]$ (i.e., $1,x,y$ are affine coordinate functions of $\ff$ ), we have that $\phi_s(1:x:y) \in \s$, which gives
$$
F(1, x,y,x^2, \ldots,x^iy^j, \dots,y^s)=0,
$$
proving (i).

It is well known that the linear system of all plane curves of degree $s<n:=\deg{\ff}$ cuts out on $\ff$ a base point free linear series $g_{ns}^{M}$, see e.g. \cite[Chapters 6 and 7 ]{HKT}. The model in $\p^M$ arising from this $g_{ns}^{M}$ is given by the morphism $\phi_s\mid_\ff=(1: x:y:x^2: \cdots:x^iy^j: \cdots : y^s)$.  Hence, such a model is  $\phi_s\mid_\ff(\ff)=\xx$. The intersection divisors of $\ff$ with the plane curves of degree $s$ are precisely the intersection divisors of $\xx$ with hyperplanes of $\p^M$. In particular, $\ff$ is $\fq$-Frobenius nonclassical with respect to the linear system of plane curves of degree $s$ if, and only if, $\xx$ is $\fq$-Frobenius nonclassical. Therefore, the result follows from Theorem \ref{fnc}.
\end{proof}

An immediate consequence of Theorem \ref{wrts} is the following.

\begin{cor}
With notation as in Theorem \ref{wrts}, let $p>M$ and  let $\s \subset \p^M(\fqc)$ be an absolutely irreducible smooth $\fq$-Frobenius nonclassical hypersurface defined by the equation $F(X_0,\ldots,X_M)=0$ with Gauss map $\Gamma$. Assume that $\xx:=\s \cap \phi_s(\p^2)$ is an absolutely irreducible nondegenerate curve such that $\Gamma \mid_\xx$ is inseparable. Then the plane curve
$$
\ff:F(Z^s, XZ^{s-1},YZ^{s-1},X^2Z^{s-2}, \ldots,X^iY^jZ^k, \dots,Y^s) =0
$$
is $\fq$-Frobenius nonclassical with respect to the linear system of all plane curves of degree $s$.
\end{cor}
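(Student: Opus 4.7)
The plan is to verify directly that this Corollary is a specialization of Theorem \ref{wrts}, checking each of its three bullet hypotheses and then showing that under the smoothness of $\s$ and the full-intersection choice $\xx=\s\cap\phi_s(\p^2)$, the ``absolutely irreducible factor'' produced in Theorem \ref{wrts}(i) is up to a constant the polynomial $F(Z^s, XZ^{s-1},\ldots,Y^s)$ itself.

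First I would verify the hypotheses of Theorem \ref{wrts}. Nondegeneracy of $\xx$ is assumed. Since $\s$ is smooth, the singular locus of $\s$ is empty, so trivially $\xx$ is not contained in it. Inseparability of $\Gamma\mid_\xx$ is the third assumption. Thus all hypotheses are satisfied, and Theorem \ref{wrts}(i) applies: $\ff=\phi_s^{-1}(\xx)$ is defined over $\fq$ by an absolutely irreducible factor of $F(Z^s, XZ^{s-1},\ldots,Y^s)$.

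Next I would argue that this factor is (up to a constant) the polynomial itself. Because $\phi_s:\p^2\to \phi_s(\p^2)$ is an isomorphism, $\phi_s^{-1}(\xx)=\phi_s^{-1}(\s\cap\phi_s(\p^2))=\phi_s^{-1}(\s)$, which as a subset of $\p^2$ is exactly the vanishing locus of $F(Z^s, XZ^{s-1},\ldots,Y^s)$. Since $\ff$ is absolutely irreducible (as the isomorphic preimage of the absolutely irreducible $\xx$), this pulled-back polynomial must be a power of the absolutely irreducible factor from Theorem \ref{wrts}(i), so the plane curve $\ff: F(Z^s, XZ^{s-1},\ldots,Y^s)=0$ is exactly the one produced there.

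Finally, to invoke Theorem \ref{wrts}(ii) I must check that $\deg\ff>s$. Let $h\in \fq[X,Y,Z]$ be the irreducible form defining $\ff$, of degree $n=\deg\ff$. A homogeneous polynomial of degree $s$ vanishing on $\ff$ must be divisible by $h$, and such a multiple exists iff $n\leq s$. Equivalently, the restriction map $H^0(\p^2,\mathcal{O}(s))\to H^0(\ff,\mathcal{O}_\ff(s))$ is injective iff $n>s$; and this injectivity is precisely the statement that $\xx=\phi_s(\ff)$ spans $\p^M$, i.e.\ is nondegenerate. By hypothesis $\xx$ is nondegenerate, so $n>s$, and Theorem \ref{wrts}(ii) yields that $\ff$ is $\fq$-Frobenius nonclassical with respect to the linear system of all plane curves of degree $s$, as desired. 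The argument is essentially a bookkeeping step; the only mildly non-formal point is the equivalence between nondegeneracy of the Veronese image and the degree inequality $\deg\ff>s$, which I expect to be the sole place where a brief argument is needed.
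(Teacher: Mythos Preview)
Your proof is correct and follows the same approach as the paper, which simply records the corollary as ``an immediate consequence of Theorem \ref{wrts}'' without further argument. You have spelled out exactly the verifications the paper leaves implicit: that smoothness handles the singular-locus hypothesis, that $\xx=\s\cap\phi_s(\p^2)$ being irreducible forces $F(Z^s,\ldots,Y^s)$ to have a single irreducible factor (so the displayed equation indeed defines $\ff$), and that nondegeneracy of $\xx$ is equivalent to $\deg\ff>s$.
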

\section{Certain Frobenius nonclassical hypersurfaces with separated variables }

Following the definition given in \cite{ADL2}, a hypersurface $\s \subset \p^n$ defined over a field $\kk$ is said to have separated variables if, up to a projective change of coordinates over $\kk$, it is defined by $F(X_0,\ldots,X_n)=0$, where
$$
F(X_0,\ldots,X_n)=G(X_0,\ldots,X_m)+H(X_{m+1},\ldots,X_n)
$$
for some $m \in \{0,\ldots,n-1\}$. 

In this section, we will investigate $\fq$-Frobenius nonclassical hypersurfaces $\s$ with separated variables defined over $\fq$ by polynomials of type
$$
F(X_0,\ldots,X_n)=G(X_0,\ldots,X_{n-1})-X_n^d.
$$
In other words, if $d\mid q-1$, then $\s$ can be regarded as a Kummer cover of the projective space $\p^{n-1}$.

\begin{thm}\label{kummer}
Let $\s:F(X_0,\ldots,X_n)=0 \subset \p^n$ be a smooth absolutely irreducible hypersurface of degree $d>2$, where $F(X_0,\ldots,X_n) \in \fq[X_0,\ldots,X_n]$ is a homogeneous polynomial such that
$$
F(X_0,\ldots,X_n)=G(X_0,\ldots,X_{n-1})-X_n^d.
$$
Then, $\s$ is $\fq$-Frobenius nonclassical if, and only if, there exists a positive integer $k$ such that  $kd=d-1+q$ with $p \mid k$ and 
\begin{equation}\label{n-1}
G^k=\sum_{i=0}^{n-1}G_i X_i^q,
\end{equation}
where $G_i=\partial G/\partial X_i$. In particular, $\mathcal{V}:G=0 \subset \p^{n-1}$ is $\fq$-Frobenius nonclassical as well. 
\end{thm}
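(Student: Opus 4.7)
The plan is to translate the $\fq$-Frobenius nonclassicality of $\s$ --- the divisibility $F \mid \sum_{i=0}^{n} F_i X_i^q$ --- into a polynomial identity in $R := \fq[X_0,\ldots,X_{n-1}]$ by explicit reduction modulo $F = G - X_n^d$. Using $F_i = G_i$ for $i \leq n-1$ and $F_n = -dX_n^{d-1}$, the condition becomes $F \mid \sum_{i=0}^{n-1} G_i X_i^q - dX_n^{d+q-1}$. Writing $d+q-1 = ad+r$ with $0 \leq r < d$ and applying $X_n^d \equiv G \pmod F$ iteratively yields $-dX_n^{d+q-1} \equiv -dG^a X_n^r \pmod F$; combined with $\sum_i G_i X_i^q \in R$, the remainder modulo $F$ has $X_n$-degree $r < d = \deg_{X_n} F$ and must therefore vanish. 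Since the $X_n$-free summand cannot cancel $-dG^a X_n^r$ when $r > 0$ (as $G \neq 0$ and $d$ is a unit in $\fq$), we must have $r = 0$, and hence $d \mid q - 1$. Setting $k := a = (q + d - 1)/d$, the vanishing remainder becomes the polynomial identity
\begin{equation*}
\sum_{i=0}^{n-1} G_i X_i^q \;=\; dG^k \quad \text{in } R, \tag{$\ast$}
\end{equation*}
which will serve as the starting point for the rest of the argument.

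From $(\ast)$ the ``in particular'' part is immediate: $G \mid \sum_i G_i X_i^q$, so $\mathcal{V}: G = 0$ is $\fq$-Frobenius nonclassical. Next, since $kd = q + d - 1 \equiv d - 1 \pmod{p}$ (as $p \mid q$), the divisibility $p \mid k$ is equivalent to $d \equiv 1 \pmod{p}$; under this last equality $d = 1$ in $\fq$ and $(\ast)$ reads precisely $G^k = \sum_i G_i X_i^q$. The backward implication is then routine: assuming $p \mid k$ and $G^k = \sum_i G_i X_i^q$, substituting $X_n^{d+q-1} \equiv G^k \pmod F$ reverses the reduction and yields $\sum_i F_i X_i^q \equiv 0 \pmod F$, so $\s$ is $\fq$-Frobenius nonclassical.

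The main obstacle is establishing $p \mid k$ in the forward direction. My plan is to differentiate $(\ast)$ with respect to each $X_j$; since $\partial X_i^q/\partial X_j = 0$ in characteristic $p$, this gives $dk \, G^{k-1} G_j = \sum_i G_{ij} X_i^q$, whose right-hand side lies in the Frobenius-power ideal $\mathfrak{m}^{[q]} := (X_0^q, \ldots, X_{n-1}^q) \subset R$. Assume for contradiction $p \nmid k$; then $dk$ is a unit in $\fq$ and so $G^{k-1} G_j \in \mathfrak{m}^{[q]}$ for every $j$. The smoothness of $\s$ forces that of $\mathcal{V}$, so $G_0, \ldots, G_{n-1}$ have no common projective zero; by the Nullstellensatz some power $\mathfrak{m}^N$ of the irrelevant ideal is contained in $(G_0, \ldots, G_{n-1})$, and hence $G^{k-1} \mathfrak{m}^N \subseteq \mathfrak{m}^{[q]}$. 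Combined with $G$ being absolutely irreducible and --- thanks to $p \nmid d$ --- not a $p$-th power in $R$, a monomial-level Frobenius-power analysis of this containment (using Macaulay-type degree bounds for the Jacobian ideal) should force an obstruction: every monomial $X^\beta$ of $G^{k-1}$ would need to satisfy $\beta_j \geq q - N$ for each $j$, which is incompatible with $|\beta| = q - 1$ as soon as $N$ is well below $q$, and hence with the irreducibility of $G$ --- unless the scalar prefactor $dk$ vanishes in $\fq$. Since $p \nmid d$, this gives $p \mid k$. Making this final compatibility argument precise, balancing absolute irreducibility, smoothness, and the Frobenius-power structure of the Jacobian ideal, is where I expect the main technical effort.
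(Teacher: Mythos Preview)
Your reduction modulo $F=G-X_n^d$ to the identity $\sum_{i<n} G_iX_i^q = d\,G^k$ with $kd=d-1+q$ is correct and matches the paper, which packages the same computation as an appeal to \cite[Lemma~3.1]{Bo}. Your treatment of the converse and of the ``in particular'' clause is likewise fine.

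The genuine gap is the forward step $p\mid k$. The paper does not try to extract this from $(\ast)$ at all; it simply invokes \cite[Theorem~1.9]{ADL2}, which says that a smooth $\fq$-Frobenius nonclassical hypersurface of degree $d$ with $p\nmid d$ satisfies $d\equiv 1\pmod p$. Applied to $\s$ directly (before any reduction), this gives $p\mid d-1$, and then $kd=d-1+q$ together with $p\nmid d$ yields $p\mid k$ in one line.

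Your alternative via the Frobenius-power ideal, as sketched, does not close. From $G^{k-1}G_j\in\mathfrak{m}^{[q]}$ and the Jacobian bound $\mathfrak{m}^N\subset(G_0,\ldots,G_{n-1})$ with $N=n(d-2)+1$ you correctly get $G^{k-1}X_j^{N}\in\mathfrak{m}^{[q]}$ for each $j$; since $\mathfrak{m}^{[q]}$ is a monomial ideal and every monomial $X^\beta$ of $G^{k-1}$ has $|\beta|=q-1<q$, this does force $\beta_j\ge q-N$ for all $j$, hence $q-1\ge n(q-N)$. But to reach a contradiction you would need $N<q-(q-1)/n$, and this already fails in the smallest cases: for $n=2$, $k=2$ one has $d=q-1$, $N=2q-5$, while the needed bound is $N<(q+1)/2$; for $n=3$, $k=2$ one has $d=q-1$, $N=3q-8$, against $N<(2q+1)/3$. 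So the monomial count alone cannot rule out $p\nmid k$; the missing ingredient is precisely the content of \cite[Theorem~1.9]{ADL2}, which you would otherwise be re-proving from scratch.
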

\begin{proof}
Assume that $\s$ is $\fq$-Frobenius nonclassical and set $F_i=\partial F/\partial X_i$. Then $F_i=G_i$ for $i=0,\ldots,n-1$ and $F_n=-dX_n^{d-1}$.  By \cite[Theorem 1.9]{ADL2} we conclude that $d \equiv 1 \mod p$, as $\s$ is smooth. Since $\s$ is $\fq$-Frobenius nonclassical, we have that 
$$
F=G-X_n^d  \ \text{ divides }\ \sum_{i=0}^{n-1}G_i X_i^q-X_n^{d-1+q}.
$$
Using \cite[Lemma 3.1]{Bo}, we have that there exists a polynomial $R \in \fq(X_0,\ldots,X_{n-2})[T]$ such that
$$
R(G(X_0,\ldots,X_{n-1}))=\sum_{i=0}^{n-1}G_i X_i^q \ \ \text{ and } \ \ R(X_n^d)=X_n^{d-1+q}.
$$ 
Thus, $R(T)=T^k$ for some integer $k>0$ and $kd=d-1+q$. Since $p \mid d-1$ and $p \nmid d$, we conclude that $p \mid k$. The converse is straightforward.
\end{proof}

\begin{cor}\label{kummer2}
Let $\s:F(X_0,\ldots,X_n)=0 \subset \p^n$ be a smooth absolutely irreducible hypersurface of degree $d>2$, where $F(X_0,\ldots,X_n) \in \fq[X_0,\ldots,X_n]$ is a homogeneous polynomial such that
$$
F(X_0,\ldots,X_n)=G(X_0,\ldots,X_{n-1})-X_n^d.
$$
If $\s$ is $\fq$-Frobenius nonclassical, then every curve $\xx \subset \s$ defined over $\fq$ is $\fq$-Frobenius nonclassical.
\end{cor}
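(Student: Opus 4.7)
The plan is to apply Theorem \ref{fnc} directly. Since $\s$ is smooth, the hypothesis that $\xx$ is not contained in the singular locus of $\s$ is automatically satisfied, so it will suffice to show that for every curve $\xx\subset\s$ defined over $\fq$ the restriction $\Gamma|_{\xx}$ of the Gauss map of $\s$ to $\xx$ is inseparable. In fact, I will aim for the stronger statement that $\Gamma$ itself is purely inseparable as a rational map, by showing that the defining homogeneous coordinates of $\Gamma$ on $\p^n$ can all be written as $p$-th powers.

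My starting point is the polynomial identity
$$
G^k=\sum_{i=0}^{n-1}G_iX_i^q
$$
supplied by Theorem \ref{kummer}, together with the divisibilities $p\mid k$ and $p\mid d-1$. Differentiating this identity with respect to $X_j$ for each $j\in\{0,\ldots,n-1\}$ and using $p\mid k$ to kill the left-hand side and $p\mid q$ to kill the terms arising from $\partial X_i^q/\partial X_j$, I would obtain the polynomial identity
$$
\sum_{i=0}^{n-1}G_{ij}X_i^q=0\qquad(j=0,\ldots,n-1),
$$
where $G_{ij}=\partial^2 G/\partial X_i\partial X_j$ is homogeneous of degree $d-2$. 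The main obstacle I anticipate is a monomial inspection: since $d\mid q-1$ with $d>2$ one has $d-2<q$, so in every monomial of $G_{ij}$ each variable appears with exponent at most $d-2<q$. Consequently the variable $X_i$ appears in $G_{ij}\cdot X_i^q$ with exponent at least $q$, whereas in $G_{i'j}\cdot X_{i'}^q$ for $i'\ne i$ the same variable $X_i$ appears with exponent strictly smaller than $q$. Monomials contributed by different summands therefore cannot collide, and the identity forces $G_{ij}=0$ as polynomials for all $i,j$.

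Once this is secured, each $G_i=\partial G/\partial X_i$ has vanishing partial derivatives in every $X_j$, whence $G_i\in\fq[X_0^p,\ldots,X_{n-1}^p]$; because $\fq$ is perfect, I can then write $G_i=h_i^p$ for some $h_i\in\fq[X_0,\ldots,X_{n-1}]$. Writing $d-1=pm$ and choosing $c\in\fq$ with $c^p=-d$ (which exists by perfectness of $\fq$), I also have $-dX_n^{d-1}=(cX_n^m)^p$. Thus the Gauss map of $\s$ admits the factorisation
$$
\Gamma=(G_0:\cdots:G_{n-1}:-dX_n^{d-1})=(h_0^p:\cdots:h_{n-1}^p:(cX_n^m)^p)=\Phi_p\circ\phi,
$$
where $\phi=(h_0:\cdots:h_{n-1}:cX_n^m)$ and $\Phi_p$ denotes the $p$-th power Frobenius on $(\p^n)^{\prime}$. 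This makes $\Gamma$ purely inseparable, and in particular its restriction to any curve $\xx\subset\s$ defined over $\fq$ is inseparable (the pulled-back function field lies in $\fq(\xx)^p$). Theorem \ref{fnc} then yields that $\xx$ is $\fq$-Frobenius nonclassical, completing the argument.
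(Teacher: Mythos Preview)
Your argument is correct and follows the same route as the paper: use Theorem~\ref{kummer} to conclude that all coordinates of the Gauss map are $p$-th powers, hence $\Gamma|_{\xx}$ is inseparable for every $\xx\subset\s$, and then invoke Theorem~\ref{fnc}. The paper's proof simply asserts that \eqref{n-1} together with $p\mid k$ forces each $G_i$ to be a $p$-th power, whereas you supply the details via differentiation and the monomial inspection showing $G_{ij}=0$; otherwise the two proofs are identical.
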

\begin{proof}
By Theorem \ref{kummer}, the Gauss map of $\s$ is $\Gamma=(\alpha_0:\cdots:\alpha_{n-1}:x_n^{d-1})$, where $\alpha_0,\ldots,\alpha_{n-1},x_n$ are respectively the residues of $G_i$  and $X_n$ in $\fq(\s)$, $i=0,\ldots,n-1$. By using the fact that $p \mid k$ and $d-1<q$, and by comparing the exponents of each variable in both sides of \eqref{n-1} we have that each  $G_i$ is a $p$-power, hence, so is $\alpha_i$. Also, $p \mid d-1$. Hence, given $\xx \subset \s$, we have that $\Gamma\mid_\xx$ is inseparable. The result then follows from Theorem \ref{fnc}.
\end{proof}

\begin{rem}\label{obsutil}
 In Theorem \ref{kummer}, the hypothesis that $\s$ is smooth is used only to ensure that $p \mid d-1$. It is therefore possible that this result can be extended to singular hypersurfaces.
\end{rem}

For the rest of this section, set $\mathcal{V}_i:X_i=0$. Another interesting case where we can investigate the Frobenius nonclassicality of a hypersurface given by separated variables is the following. Let $\s$ be defined by
\begin{equation}\label{hfg}
F(X_0,\ldots,X_n)=G(X_0,X_1,X_2)+a_3X_3^d+\cdots+a_nX_n^d.
\end{equation}

\begin{prop}\label{sepprot}
Let $\s:F(X_0,\ldots,X_n)=0 \subset \p^n$ be an absolutely irreducible hypersurface of degree $d>2$, where $F(X_0,\ldots,X_n) \in \fq[X_0,\ldots,X_n]$ is as in \eqref{hfg}, and $q$ is a power of a prime $p>3$. 
Assume that $\mathcal{V}:=\s\cap \mathcal{V}_3\cap \cdots\cap \mathcal{V}_n \subset \s_{\operatorname{reg}}$. If $\s$ is $\fq$-Frobenius nonclassical, then $p \mid d-1$, and there exists an integer $r>0$  and homogeneous polynomials of degree $(d-1)/p^r$ $A(X_0,X_1,X_2)$, $B(X_0,X_1,X_2)$, $C(X_0,X_1,X_2) \in \fq[X_0,X_1,X_2]$ such that
$$
G(X_0,X_1,X_2)=A(X_0,X_1,X_2)^{p^r}X_0+B(X_0,X_1,X_2)^{p^r}X_1+C(X_0,X_1,X_2)^{p^r}X_2.
$$
\end{prop}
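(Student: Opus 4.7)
The plan is to exploit the hypothesis $\mathcal{V}\subset\s_{\operatorname{reg}}$ in order to reduce the problem to the plane curve $\cc_G:G=0\subset\p^2$, and then to invoke Homma's classification of smooth $\fq$-Frobenius nonclassical plane curves from \cite{Ho}. First I would use that $\s$ being $\fq$-Frobenius nonclassical amounts to the polynomial divisibility $F \mid \sum_{i=0}^n F_i X_i^q$ in $\fq[X_0,\ldots,X_n]$. Since $F_i = G_i$ for $i \in \{0,1,2\}$ and $F_i = d\, a_i X_i^{d-1}$ for $i \geq 3$, specializing this divisibility at $X_3=\cdots=X_n=0$ makes every monomial $X_i^{d-1+q}$ with $i\geq 3$ vanish, and yields $G \mid G_0X_0^q+G_1X_1^q+G_2X_2^q$ in $\fq[X_0,X_1,X_2]$; equivalently, the plane curve $\cc_G$ is $\fq$-Frobenius nonclassical.

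Next I would verify that $\cc_G$ is smooth. At any point $P=(a_0:a_1:a_2:0:\cdots:0)\in\mathcal{V}$, the partial derivatives $F_i(P)=d\, a_i\cdot 0^{d-1}$ vanish for $i\geq 3$ (since $d>2$ gives $d-1\geq 1$); thus, by the regularity hypothesis $\mathcal{V}\subset\s_{\operatorname{reg}}$, at least one of $G_0(P),G_1(P),G_2(P)$ is nonzero. This says precisely that $\cc_G$ has no singular points.

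At this stage I would invoke Homma's classification of smooth $\fq$-Frobenius nonclassical plane curves in \cite{Ho}. Together with Proposition \ref{insep} (which forces the Gauss map of $\cc_G$ to be inseparable), it produces an integer $r\geq 1$ together with homogeneous polynomials $A,B,C\in\fq[X_0,X_1,X_2]$ of degree $(d-1)/p^r$ and a scalar $\lambda\in\fq^*$ such that $(G_0,G_1,G_2)=\lambda(A^{p^r},B^{p^r},C^{p^r})$. Matching degrees forces $p^r\mid d-1$, so in particular $p\mid d-1$ and $p\nmid d$. Euler's identity then gives
$$
d\cdot G \;=\; X_0G_0+X_1G_1+X_2G_2 \;=\; \lambda\left(A^{p^r}X_0+B^{p^r}X_1+C^{p^r}X_2\right),
$$
and since $\lambda/d\in\fq^*$ is a $p^r$-th power (because $\fq$ is perfect), absorbing it into $A,B,C$ yields the desired decomposition of $G$.

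The main obstacle I expect is the third step: extracting from Homma's classification the pure $p^r$-power form of $(G_0,G_1,G_2)$ up to a common scalar, and lifting this identification---a priori valid in $\fq(\cc_G)$---to an equality of polynomials modulo $G$ (the smoothness of $\cc_G$ is what makes this lift possible). Once this is in hand, the remaining steps---the specialization of the polynomial divisibility, the partial-derivative computation on $\mathcal{V}$, and the application of Euler's identity---are essentially bookkeeping.
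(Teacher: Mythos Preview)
Your proposal is correct and follows essentially the same route as the paper: specialize the divisibility $F\mid\sum F_iX_i^q$ at $X_3=\cdots=X_n=0$ to see that $\cc_G$ is $\fq$-Frobenius nonclassical, use the hypothesis $\mathcal{V}\subset\s_{\operatorname{reg}}$ to check $\cc_G$ is smooth, and then appeal to Homma \cite[Corollary 2.5]{Ho}. The only cosmetic differences are that the paper bridges to Homma via \cite[Proposition 2.1]{SV} (Frobenius nonclassical $\Rightarrow$ nonclassical for $p>3$) rather than via Proposition~\ref{insep}, and that Homma's result already returns the decomposition $G=A^{p^r}X_0+B^{p^r}X_1+C^{p^r}X_2$ directly, so your Euler-identity step is unnecessary (though not wrong).
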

\begin{proof}
Assume that $\s$ is $\fq$-Frobenius nonclassical and consider the plane curve $\ff:G(X_0,X_1,X_2)=0$ defined over $\fq$. Then $\ff$ is smooth, and in particular, it is absolutely irreducible. Indeed, since $\partial F/\partial X_i=\partial G/\partial X_i$ for $i=0,1,2$ and $\partial F/\partial X_i=a_idX_i^{d-1}$ for $i \geq 3$, if $P=(a:b:c)$ is a singular point of $\ff$, then $Q=(a:b:c:0:\cdots:0)$ is a singular point of $\mathcal{V}$, contradicting the smoothness of $\mathcal{V}$. Set $F_i=\partial F/\partial X_i$ and $G_i=\partial G/\partial X_i$. The $\fq$-Frobenius nonclassicality of $\s$ gives that
\begin{equation}\label{eqfnc3}
F(X_0,\ldots,X_n) \text{ divides }  \sum_{i=0}^{n} F_iX_i^q.
\end{equation}
From $G(X_0,X_1,X_2)=F(X_0,X_1,X_2,0,\ldots,0)$ and $\partial F/\partial X_i=\partial G/\partial X_i$ for $i=0,1,2$, \eqref{eqfnc3} implies
\begin{equation}\label{eqfnc2}
G(X_0,X_1,X_2) \text{ divides } G_0X_0^q+G_1X_1^q+G_2X_2^q.
\end{equation}
In turn,  \eqref{eqfnc2} is equivalent to $\ff$ being $\fq$-Frobenius nonclassical. Since $p>3$, \cite[Proposition 2.1]{SV} implies that $\ff$ is nonclassical. Hence, once $\ff$ is smooth, we conclude from a result by  Homma \cite[Corollary 2.5]{Ho} that there exist homogeneous polynomials $A(X_0,X_1,X_2)$, $B(X_0,X_1,X_2)$, $C(X_0,X_1,X_2) \in \fq[X_0,X_1,X_2]$  and an integer $r>0$ such that
$$
G(X_0,X_1,X_2)=A(X_0,X_1,X_2)^{p^r}X_0+B(X_0,X_1,X_2)^{p^r}X_1+C(X_0,X_1,X_2)^{p^r}X_2. 
$$
Also, $A,B$ and $C$ have degree $(d-1)/p^r$. In particular, $d \equiv 1 \mod p^r$. 
\end{proof}

\begin{cor}\label{sepres}
Let $\s:F(X_0,\ldots,X_n)=0 \subset \p^n$ be a smooth absolutely irreducible hypersurface of degree $d>2$, where $F(X_0,\ldots,X_n) \in \fq[X_0,\ldots,X_n]$ is a homogeneous polynomial such that
$$
F(X_0,\ldots,X_n)=G(X_0,X_1,X_2)+a_3X_3^d+\cdots+a_nX_n^d.
$$
Assume that $p>3$ and that $\s$ is $\fq$-Frobenius nonclassical. Then every curve $\xx \subset \s$ defined over $\fq$ is $\fq$-Frobenius nonclassical. 
\end{cor}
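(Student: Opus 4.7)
My plan is to mimic the strategy used in the proof of Corollary \ref{kummer2}: show that the hypotheses force the Gauss map $\Gamma$ of $\s$ to have all its coordinate functions given by $p$-powers, so that its restriction $\Gamma\mid_\xx$ to any curve $\xx \subset \s$ is inseparable, and then conclude via Theorem \ref{fnc}.

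First I would note that the smoothness of $\s$ implies in particular that $\mathcal V = \s \cap \mathcal V_3 \cap \cdots \cap \mathcal V_n \subset \s_{\operatorname{reg}}$, so Proposition \ref{sepprot} applies. This yields an integer $r>0$ with $d \equiv 1 \pmod{p^r}$ and homogeneous polynomials $A,B,C \in \fq[X_0,X_1,X_2]$ of degree $(d-1)/p^r$ such that
$$
G(X_0,X_1,X_2) = A^{p^r} X_0 + B^{p^r} X_1 + C^{p^r} X_2.
$$
Next I would compute the partial derivatives of $F$. Since we are in characteristic $p$ and $r>0$, differentiating $A^{p^r}$, $B^{p^r}$, $C^{p^r}$ with respect to any variable gives zero; therefore
$$
F_0 = A^{p^r}, \quad F_1 = B^{p^r}, \quad F_2 = C^{p^r},
$$
while for $i \geq 3$ one has $F_i = d a_i X_i^{d-1}$. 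Because $d a_i \in \fq$ and the Frobenius $\Phi_{p^r}$ is an automorphism of $\fq$, there exists $b_i \in \fq$ with $b_i^{p^r} = da_i$, and as $(d-1)/p^r$ is an integer we can write
$$
F_i = \bigl( b_i X_i^{(d-1)/p^r} \bigr)^{p^r}, \qquad i=3,\ldots,n.
$$

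Then every coordinate of $\Gamma = (F_0 : \cdots : F_n)$ is a $p^r$-th power, so $\Gamma = \Phi_{p^r} \circ \Gamma_s$ for a morphism $\Gamma_s$, which shows that $\Gamma$ is inseparable. Given any curve $\xx \subset \s$ defined over $\fq$, the restriction $\Gamma\mid_\xx$ has the same coordinate form $(\alpha_0^{p^r}:\cdots:\alpha_n^{p^r})$, where the $\alpha_i$ are the residues of the polynomials above in $\fq(\xx)$, so $\Gamma\mid_\xx$ factors through the Frobenius $\Phi_{p^r}$ and is therefore inseparable. Finally, because $\s$ is assumed $\fq$-Frobenius nonclassical and smooth (so $\xx$ cannot be contained in its singular locus), Theorem \ref{fnc} applies and gives that $\xx$ is $\fq$-Frobenius nonclassical.

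I do not foresee a serious obstacle here: Proposition \ref{sepprot} already does the heavy lifting by providing the $p^r$-power structure of $G$, and the rest is a direct verification that the Gauss map of $\s$ (not just of the associated plane curve) inherits this structure, followed by an appeal to Theorem \ref{fnc}. The only point requiring a little care is to ensure that the pure variable factor $X_i^{d-1}$ and the constant $d a_i$ in the $i\geq 3$ partials can both be absorbed into a single $p^r$-th power, which uses perfectness of $\fq$ and the divisibility $p^r \mid d-1$ extracted from Proposition \ref{sepprot}.
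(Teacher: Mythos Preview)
Your proposal is correct and follows precisely the route the paper intends: the paper's own proof reads simply ``The proof of this statement is analogous to the proof of Corollary~\ref{kummer2},'' and your argument is exactly that analogy spelled out---use Proposition~\ref{sepprot} (in place of Theorem~\ref{kummer}) to exhibit all coordinates of $\Gamma$ as $p^r$-th powers, deduce that $\Gamma\mid_\xx$ is inseparable, and invoke Theorem~\ref{fnc}.
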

\begin{proof}
The proof of this statement is analogous to the proof of Corollary \ref{kummer2}.
\end{proof}

We now present a characterization of hypersurfaces defined by \eqref{hfg}, whenever $n>3$.

\begin{thm}\label{46}
For $n>3$, let $\s:F(X_0,\ldots,X_n)=0 \subset \p^n$ be an absolutely irreducible hypersurface of degree $d>2$, where $F(X_0,\ldots,X_n) \in \fq[X_0,\ldots,X_n]$ is a homogeneous polynomial such that
\begin{equation}\label{nova}
F(X_0,\ldots,X_n)=G(X_0,X_1,X_2)+a_3X_3^d+\cdots+a_nX_n^d,
\end{equation}
with $a_i \neq 0$ for all $i= 1,\ldots,n$  and $q$ is a power of a prime $p>3$. Assume that $\mathcal{V}:=\s\cap \mathcal{V}_3\cap \cdots\cap \mathcal{V}_n \subset \s_{\operatorname{reg}}$.  Then $\s$ is $\fq$-Frobenius nonclassical if, and only if, there exists $r>0$ such that $d=\frac{q-1}{p^r-1}$, and homogeneous polynomials of degree $(d-1)/p^r$ $A(X_0,X_1,X_2)$, $B(X_0,X_1,X_2)$, $C(X_0,X_1,X_2) \in \fq[X_0,X_1,X_2]$ such that
\begin{eqnarray}
G(X_0,X_1,X_2)&=&A(X_0,X_1,X_2)^{p^r}X_0+B(X_0,X_1,X_2)^{p^r}X_1+C(X_0,X_1,X_2)^{p^r}X_2 \nonumber \\
                          &=&A(X_0,X_1,X_2)X_0^{q/p^r}+B(X_0,X_1,X_2)X_1^{q/p^r}+C(X_0,X_1,X_2)X_2^{q/p^r} \nonumber
\end{eqnarray}
and $a_i/a_n \in \F_{p^r}$ for all $i=3,\ldots,n-1$.
\end{thm}
\begin{proof}
 First, after dividing by $a_n$ both sides of \eqref{nova}, we may assume that $a_n=1$. Now, let $\s$ be $\fq$-Frobenius nonclassical. From Proposition \ref{sepprot} there exist an integer $r>0$ and polynomials $A,B,C$ satisfying the equality above; also, $p \mid d-1$. Note that $r$ can be chosen so that at least one of the $A,B,C$ is not a $p$-power. Remark \ref{obsutil} ensures that  the statements of Theorem \ref{kummer} hold for $S$.  In particular, $d=\frac{q-1}{k-1}$ with $k \equiv 0 \mod p$ and 
\begin{equation}\label{sl1}
\begin{split}
(-G(X_0,X_1,X_2)-a_3X_3^d-\cdots-a_{n-1}X_{n-1}^d)^k=\ & -A^{p^r}X_0^q-B^{p^r}X_1^q-C^{p^r}X_2^q \\
& -a_3X_3^{dk}-\cdots  -a_{n-1}X_{n-1}^{dk}.
\end{split}
\end{equation}
Observe that $d=p^rm+1$ for $m=(d-1)/p^r>0$ and $p^r<q$ (since $d<q$). Let $p^\ell$ be the highest power of $p$ dividing $k$. From \eqref{sl1}, we must have $\ell\leq r$ since, otherwise, $A,B,C$ will all be $p^\ell$ powers. On the other hand, $dk=d-1+q=p^rm+q=p^r(m+q/p^r)$. Thus $p^r \mid k$, which gives $r=\ell$. Hence, $k=p^rc$ for some $c>0$ co-prime with $p$, and \eqref{sl1} provides
\begin{equation}
\begin{split}
(-G(X_0,X_1,X_2)-a_3X_3^d-\cdots-a_{n-1}X_{n-1}^d)^c=\ &-AX_0^{q/p^r}-BX_1^{q/p^r}-CX_2^{q/p^r} \\
& -a_3^{1/p^r}X_3^{dc}-\cdots-a_{n-1}^{1/p^r}X_{n-1}^{dc}.
\end{split}
\end{equation}
Since $A,B,C \in \fq[X_0,X_1,X_2]$, equality above implies $c=1$ and $a_i^{p^r}=a_i$ for all $i=3,\ldots,n$. Finally, \eqref{sl1} gives 
$$
G(X_0,X_1,X_2)^{p^r}=A^{p^r}X_0^q+B^{p^r}X_1^q+C^{p^r}X_2^q,
$$
proving the last assertion. The converse is straightforward.  
\end{proof}


The Fermat hypersurfaces $\s_n \subset \p^M$ presented in Example \ref{fermat} illustrate both Proposition \ref{sepprot} and Corollary \ref{sepres}. The following example also pertains to both results.

\begin{ex}\label{garcia}
Assume $p>3$ and consider the smooth plane curve $\ff$ defined over $\F_{p^3}$ by the equation $Y^{p^2+p+1}=(X^{p+1}+Z^{p+1})^pX+(X^p+XZ^{p-1})^pZ^{p+1}$. The curve $\ff$ is $\F_{p^3}$-Frobenius nonclassical, see \cite[Section 3]{Ga}. Let $\s \subset \p^3$ be the surface defined by the equation $F=0$, where
$$
F=(X^{p+1}+Z^{p+1})^pX+(X^p+XZ^{p-1})^pZ^{p+1}+Y^{p^2+p+1}+W^{p^2+p+1}.
$$
One can check that $\s$ is smooth. Moreover, $\s$ is $\F_{p^3}$-Frobenius nonclassical. Indeed, $F_XX^{p^3}+F_YY^{p^3}+F_ZZ^{p^3}+F_WW^{p^3}$
\begin{eqnarray}
& =&(X^{p+1}+Z^{p+1})^pX^{p^3}+Y^{p^2+p}Y^{p^3}+(X^p+XZ^{p-1})^pZ^{p}Z^{p^3}+W^{p^2+p}W^{p^3} \nonumber \\\
&=&\left( (X^{p+1}+Z^{p+1})X^{p^2}+Y^{p^2+p+1}+(X^p+XZ^{p-1})Z^{p^2+1}+W^{p^2+p+1}\right)^p=F^p \nonumber 
\end{eqnarray}
By Corollary \ref{sepres}, the curves defined over $\F_{p^3}$ lying on $\s$ are a source of  $\F_{p^3}$-Frobenius nonclassical curves in $\p^3$.
\end{ex}

It should be noted that one can also combine Theorem \ref{wrts} with Theorem \ref{46} to construct new Frobenius nonclassical plane curves with respect to the linear system of all plane curves of a given degree. 


\section{Properties of a class of Frobenius nonclassical hypersurfaces}

In this section, we will investigate certain Frobenius nonclassical hypersurfaces defined over $\fq$. More precisely, we will assume that $\s \subset \p^n$ is an $\fq$-Frobenius nonclassical hypersurface satisfying the following property:
\begin{itemize}
\item [(A)] The Gauss map $\Gamma$ of $\s$ admits coordinates of the form $\Gamma=(U_0^{p^r}:\cdots:U_n^{p^r})$ with $r>0$, where $(U_0:\ldots:U_n)$ is a generically finite and separable rational map. 
\end{itemize}

As far as the authors know, all known examples of Frobenius nonclassical hypersurfaces with generically finite Gauss map satisfy  property (A) above. Our first result provides a bound for the inseparable degree of $\Gamma$.

\begin{prop}
Let $\s \subset \p^n$ be an absolutely irreducible $\fq$-Frobenius nonclassical hypersurface  defined over $\fq$ satisfying (A). Then the inseparable degree $\deg_i(\Gamma)$ satisfies
$$
p^{n-1} \leq \deg_i(\Gamma) \leq q^{n-1}.
$$
\end{prop}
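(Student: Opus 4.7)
The plan is to reduce everything to bounds on the exponent $r$ appearing in property~(A). First I would combine the factorization $\Gamma = \Phi_{p^r}\circ \gamma_s$ provided by~(A) with the fact that $\gamma_s=(U_0:\cdots:U_n)$ is separable and generically finite onto a variety $V\subset(\p^n)^{\prime}$ of dimension $n-1$. Since separable and inseparable degrees multiply along compositions, this gives $\deg_s(\Gamma)=\deg \gamma_s$ and
$$\deg_i(\Gamma)=\deg_i\bigl(\Phi_{p^r}\vert_V\bigr)=p^{r(n-1)}.$$
So the task reduces to proving $1\le r\le h$, where $q=p^h$.

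The lower bound is immediate from Proposition~\ref{insep}: since $\s$ is $\fq$-Frobenius nonclassical, $\Gamma$ is inseparable, which forces $r\ge 1$. Together with $n\ge 2$ this yields $\deg_i(\Gamma)=p^{r(n-1)}\ge p^{n-1}\ge p$.

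For the upper bound I would argue by contradiction: assume $r>h$ and set $s:=r-h\ge 1$. Writing $F_i=\alpha U_i^{p^r}$ in $\fqc(\s)$, the $\fq$-Frobenius nonclassicality relation $\sum F_i x_i^q=0$ becomes $\sum U_i^{p^r}x_i^q=0$. Since $p^r=qp^s$, I can rewrite $U_i^{p^r}=(U_i^{p^s})^q$ and use that $(\sum a_i)^q=\sum a_i^q$ in characteristic $p$ to extract the $q$-th root, obtaining the key relation
$$\sum_{i=0}^n U_i^{p^s} x_i=0\qquad \text{in } \fqc(\s).$$

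The main step, and what I expect to be the principal obstacle, is to turn this relation into a contradiction with the generic finiteness of $\gamma_s$. Choosing the $U_i$ as coprime homogeneous polynomials of common degree defining $\gamma_s$, the relation above lifts to a polynomial identity $\sum_{i=0}^n U_i^{p^s}X_i=F\cdot L$ for some nonzero $L\in\fqc[X_0,\ldots,X_n]$ (if $L=0$, differentiation would force every $U_j^{p^s}=0$). Differentiating this identity in $X_j$ and using $\partial U_i^{p^s}/\partial X_j=p^s U_i^{p^s-1}\partial U_i/\partial X_j=0$ (as $s\ge 1$), only the term with $i=j$ survives on the left, yielding
$$U_j^{p^s}=F_jL+FL_j.$$
Reducing modulo $F$ and substituting $F_j=\alpha U_j^{p^r}$ gives $U_j^{p^s}=\alpha L\,U_j^{p^r}$ in $\fqc(\s)$, so $U_j^{p^r-p^s}=1/(\alpha L)$ is independent of $j$. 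Therefore $(U_i/U_j)^{p^r-p^s}=1$ in $\fqc(\s)$ for every $i,j$, making each ratio $U_i/U_j$ a root of unity, hence a constant in $\fqc$. But then $\gamma_s$ would be a constant map, contradicting generic finiteness onto the $(n-1)$-dimensional variety $V$. This forces $r\le h$, and hence $\deg_i(\Gamma)=p^{r(n-1)}\le p^{h(n-1)}=q^{n-1}$.
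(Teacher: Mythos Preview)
Your proposal is correct and follows essentially the same route as the paper. Both arguments reduce to showing $r\le h$ by assuming $r>h$, extracting the $q$-th root to obtain $\sum U_i^{p^{r-h}}x_i=0$, lifting to a polynomial identity $\sum U_i^{p^{r-h}}X_i=FL$, and differentiating to get $U_j^{p^{r-h}}\equiv F_jL \bmod F$. The only divergence is in the final contradiction: the paper observes that this identity means $\Gamma=(U_0^{p^{r-h}}:\cdots:U_n^{p^{r-h}})$ as well, and compares the two computations of $\deg_i(\Gamma)$ to force $h=0$; you instead substitute $F_j=\alpha U_j^{p^r}$ and conclude that the ratios $U_i/U_j$ are roots of unity, hence constants, making $\gamma_s$ a constant map. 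Both endings are valid and essentially equivalent --- yours is a touch more elementary, the paper's is a touch shorter. One minor remark: your appeal to Proposition~\ref{insep} for the lower bound is unnecessary, since property~(A) already requires $r>0$; the paper simply reads $r\ge 1$ off the hypothesis.
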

\begin{proof}
Since $\s$ satisfies property (A), $\deg_i(\Gamma)=(p^{r})^{n-1}$. In particular, $p^{n-1} \leq \deg_i(\Gamma)$. Set $q=p^h$ and suppose  that $q^{n-1}<  \deg_i(\Gamma)$. Since $F_i=ZU_i^{p^r}$ for some $Z \in \fq(S)$ for $i=0,\ldots,n$, then
$$
0=\sum_{i=0}^{n}F_i x_i^q= Z \cdot \sum_{i=0}^{n}U_i^{p^r} x_i^{p^h}= Z \cdot \left(\sum_{i=0}^{n}U_i^{p^{r-h}} x_i\right)^{p^h} \Lra \sum_{i=0}^{n}U_i^{p^{r-h}} x_i=0.
$$
Proceeding as in the proof of Proposition \ref{class}, the last equality implies that $\Gamma=(U_0^{p^{r-h}}:\cdots:U_n^{p^{r-h}})$. Now, the inseparable degree of this map is $(p^{r-h})^{n-1}$ since $(U_0:\ldots:U_n)$ is generically finite and separable. This gives $h =0$, a contradiction.
\end{proof}

The assumption that $\Gamma$ is generically finite imposes some relevant restrictions whenever $n>2$. For instance, let $\s \subset \p^n$ with $n>2$ be an absolutely irreducible hypersurface defined over a perfect field $\kk$ of positive characteristic $p>0$, and let $x_0,x_1,\ldots,x_n \in \kk(\s)$ be the projective coordinate functions of $\s$;  if $\Gamma$ is generically finite, then  no more than one of the $x_i$ is a $p$-th power in $\kk(\s)$. In fact, in a suitable open set of $\s$, we may assume that $x_0=1$ and $\kk(\s)=\kk(x_1,\ldots,x_n)$. Without loss of generality,  assume that $x_n \neq 0$ is a  $p$-th power in $\kk(\s)$. Then, there exists $w \in \kk(\s)$ such that $x_n=w^p$. If $F(x_1,\cdots,x_n)=0$ is the affine equation defining $\s$, there exist $U,H,G \in \kk[X_1,\ldots,X_n]\backslash{\{0\}}$, with neither $U$ nor $G$ divisible by $F$, such that
\begin{equation}\label{powp}
G^pX_n-H^p=UF^m
\end{equation}
for some $m>0$. 
Also, $p \nmid m$. Indeed, if $p \mid m$, then \eqref{powp} gives $G^p=U_nF^m$, and then $F \mid G$, a contradiction. Now, for $j \neq n$, we have
$$
0=F^{m-1}(mUF_j+U_jF) .   
$$
Hence,
$$
U_jF=-mUF_j.
$$
Since $F$ does not divide both $F_j$ and $U$, we must have $U_j=F_j=0$ for all $j \neq n$. This means that the Gauss map of $\s$ is locally given by $\Gamma=(1:0:\cdots:0:f_n)$, where $f_n$ is the residue of $F_n$ in $\kk(S)$. In particular, the function field of $\s^\prime$ is $\kk(\s^\prime)=\kk(f_n)$. Since $n>2$, the extension
$$
\kk(\s)=\kk(x_1,\ldots,x_n)/\kk(f_n)=\kk(\s^\prime)
$$
is transcendent. Therefore, $\Gamma$ is not generically finite.

Also, if $\s$ is a cone, then $\Gamma$ is not generically finite. To see this, without loss of generality, consider $\s \subset \p^n$ defined over a field $K$ (of arbitrary characteristic) by a homogeneous polynomial $F(X_0,\ldots,X_{n-1})$. Then $F_n=0$ and $F_i$ does not depend on the variable $X_n$. Hence, for a general point $P=(a_0:\cdots:a_n) \in \s$, one has $\Gamma^{-1}\{\Gamma(P)\}\supset\{(a_0:a_1:\cdots:a_{n-1}:t) \ : \ t \in K\}$, i. e., the general fiber of $\Gamma$ contains a line. 

When $\s$ is an $\fq$-Frobenius nonclassical plane curve, by a classical result, then  $\Gamma$ is purely inseparable, $\s=\s^{\prime \prime}$ and $\Gamma^\prime \circ \Gamma=\Phi_q$. However, this no longer holds for higher dimensional hypersurfaces. For example, consider the surface $\s \subset \p^3$ defined over a finite field $\fq$ by $\s:X^{q+1}+Y^{q+1}+Z^{q+1}=0$, that is, $\s$ is a cone over a Hermitian plane curve. As we saw above, in this case the general fiber of the Gauss map $\Gamma$ of $\s$ is a line, and then $\Gamma$ is not a finite map. However, in this example $\s$ is $\fqs$-Frobenius nonclassical and $(0:0:0:1)$ is a singular point of $\s$.

We claim that the classical result on plane $\fq$-Frobenius nonclassical curves can be generalized to $\fq$-Frobenius nonclassical hypersurfaces satisfying (A). In order to prove this, we need first a technical result.

\begin{lem}\label{lt}
Let $\s \subset \p^n$ be an absolutely irreducible nondegenerate hypersurface defined over a perfect field $K$ of characteristic $p>0$. Assume that $K(\s)=K(x_1,\ldots,x_n)$ where not all $x_i$ are powers of $p$ and let $\Psi=(u_0:u_1:\cdots:u_n):\s \dashrightarrow \p^n$ be a generically finite separable rational map. Assume further that
\begin{equation}\label{eqpi}
\sum_{i=0}^{n}x_i^{p^\ell}u_i=0
\end{equation}
for some $\ell>0$, where $x_0=1$. Then $\overline{\Psi(\s)}$ is nondegenerate and $K(\s)=K(u_1/u_0,\ldots,u_n/u_0)$.  Equivalently, $\Psi$ is birational.
\end{lem}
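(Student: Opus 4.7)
The plan is to extract algebraic information about $x_0,\dots,x_n$ from the given relation $\sum_{i=0}^n x_i^{p^\ell} u_i = 0$ by applying suitable derivations. Setting $v_i := u_i/u_0$ for $i = 1,\dots,n$ and $v_0 = 1$, the function field of the image of $\Psi$ is $L := K(v_1,\dots,v_n)$, and the relation reads
\[
1 + \sum_{i=1}^n x_i^{p^\ell} v_i = 0.
\]
Since $\Psi$ is generically finite and separable, $K(\s)/L$ is finite separable; in particular $L/K$ is separable, hence separably generated. After relabeling, I may then take $v_1,\dots,v_{n-1}$ to be a separating transcendence basis of $L/K$, which (combined with the separability of $K(\s)/L$) is automatically a separating transcendence basis of $K(\s)/K$ as well.

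Next I would apply the partial derivations $D_k := \partial/\partial v_k$ ($k=1,\dots,n-1$) to the relation. Since in characteristic $p$ every $p$-th power is killed by any derivation, $D_k(x_i^{p^\ell}) = 0$, producing the equations $\sum_{i=1}^n (D_k v_i)\,x_i^{p^\ell} = 0$. With the chosen basis one has $D_k v_j = \delta_{kj}$ for $j\le n-1$ and $D_k v_n = \partial v_n/\partial v_k \in L$, whence $x_k^{p^\ell} = -x_n^{p^\ell}\,\partial v_n/\partial v_k$ for $k<n$. Substituting back into the original relation yields
\[
x_n^{p^\ell}\Bigl( v_n - \sum_{k=1}^{n-1} v_k\,\partial v_n/\partial v_k\Bigr) = -1,
\]
so every $x_i^{p^\ell}$ is an element of $L$. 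Because $K(\s)/L$ is separable, any element of $K(\s)$ whose $p^\ell$-th power lies in $L$ must itself lie in $L$; thus $x_i \in L$ for all $i$, giving $K(\s) = K(x_1,\dots,x_n)\subseteq L\subseteq K(\s)$, and so $L = K(\s)$, i.e.\ $\Psi$ is birational.

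For the nondegeneracy claim I would argue by contradiction. If the image of $\Psi$ lies in a hyperplane, there exist $c_0,\dots,c_n\in K$, not all zero, with $c_0 + \sum_{i\ge 1} c_i v_i = 0$. After relabeling so that $c_n\ne 0$, one writes $v_n = c_0' + \sum_{k<n} c_k' v_k$ with $c_i'\in K$; then $\partial v_n/\partial v_k = c_k'\in K$, and the denominator $v_n - \sum v_k\,\partial v_n/\partial v_k$ collapses to the constant $c_0'$. If $c_0'\ne 0$ the explicit formula forces every $x_i^{p^\ell}$ into $K$, so $K(\s)/K$ becomes algebraic, contradicting the fact that $K(\s)$ has transcendence degree $n-1\ge 1$ over $K$; if $c_0' = 0$, the relation $\sum v_i x_i^{p^\ell} = -1$ collapses to $0 = -1$. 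Either way a contradiction arises and $\Psi$ must be nondegenerate.

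The principal technical obstacle is the careful handling of the separating transcendence basis: the subfield $L\subseteq K(\s)$ must inherit separable generation from $K(\s)/K$, which is where the absolute irreducibility of $\s$ together with the hypothesis that not all $x_i$ are $p$-th powers enters, ensuring that $K(\s)/K$ is separably generated and that the derivations $D_k$ are well-defined at the generic point. Once this is in place, the rest of the argument is a clean application of Cramer's rule to the Jacobian structure obtained after row reducing the coefficient matrix using $u_i = v_i u_0$.
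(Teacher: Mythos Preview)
Your proof is correct and takes a genuinely different route from the paper's. The paper lifts the relation to a polynomial ring $K[U_0,\ldots,U_n,Z]$ (with $Z$ a primitive element for the finite separable extension $K(\s)/L$), differentiates first in $Z$ and then in the $U_i$, and thereby identifies $(x_0^{p^\ell}:\cdots:x_n^{p^\ell})$ projectively with the Gauss map of the image hypersurface $\overline{\Psi(\s)}$; since this Gauss map is defined over $L$, the tower $K(G_i/G_0)\subseteq L\subseteq K(\s)$ forces $K(\s)/L$ to be simultaneously separable and purely inseparable, hence trivial. You instead stay entirely inside the function field: a separating transcendence basis $v_1,\dots,v_{n-1}$ of $L/K$ exists among the $v_i$ because absolute irreducibility of $\s$ makes $K(\s)/K$, and therefore the subextension $L/K$, separable, and applying the derivations $\partial/\partial v_k$ kills the $p^\ell$-th powers and solves directly for each $x_i^{p^\ell}\in L$. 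Your argument is shorter and more elementary; the paper's approach has the side benefit of explicitly exhibiting the Gauss map of $\overline{\Psi(\s)}$ as $(x_0^{p^\ell}:\cdots:x_n^{p^\ell})$, which is exactly the identification invoked immediately afterward to compute $\Gamma'$ in the main theorem of the section.

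Two minor points worth tightening. First, once $v_1,\dots,v_{n-1}$ have been fixed as a transcendence basis, any nontrivial linear relation $c_0+\sum_{i\ge 1} c_iv_i=0$ automatically has $c_n\ne 0$ (otherwise the $v_k$ with $k<n$ would be algebraically dependent over $K$), so the ``relabeling so that $c_n\ne 0$'' in your nondegeneracy argument is unnecessary and in fact could not be performed independently of the earlier relabeling. Second, writing $v_i=u_i/u_0$ tacitly assumes $u_0\ne 0$; this is harmless after a preliminary permutation of indices (which simultaneously permutes the $x_i$, and nondegeneracy of $\s$ guarantees every $x_i\ne 0$ so the argument goes through), but deserves a word.
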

\begin{proof}
Let $\p^n_Y$ denote the target projective space of $\Psi$, that is, $\Psi:\s \dashrightarrow \p^n_Y$, and let $(Y_0:\cdots:Y_n)$ be the homogeneous coordinates of $\p^n_Y$. Set
$$
\mathcal{W}:=\overline{\Psi(\s)} \subset \p^n_Y.
$$
Since $\Psi$ is generically finite, we have that $\dim \mathcal{W}=\dim \s=n-1$. Thus, $\mathcal{W}$ is a hypersurface in $\p^n_Y$, possibly, a hyperplane. At a general point $P \in \s$, by equation \eqref{eqpi} we have
$$
\sum_{i=0}^{n}x_i(P)^{p^\ell}u_i(P)=0.
$$
This means that the point $\Psi(P)=(u_0(P):\cdots:u_n(P))$ lies on the hyperplane $H_P \subset \p^n_Y$ defined by
$$
H_P: \sum_{i=0}^{n}x_i(P)^{p^\ell}Y_i=0.
$$
Now, since $\Psi$ is generically finite and separable, then its differential $d\Psi_P=(du_0,\ldots,du_n)$ is generically a isomorphism of $\kk$-vector spaces
$$
d\Psi_P: T_P\s \xlongrightarrow{\ \ \cong \ \ } T_{\Psi(P)}\mathcal{W},
$$
see e.g. \cite[Proposition 5.51]{Mi}. In turn, differentiating both sides of equation \eqref{eqpi} provides
$$
\sum_{i=0}^{n}x_i^{p^\ell}du_i=0
$$
as $d(x_i^{p^\ell})=0$. This means that the hyperplane $H_P$ contains the image of the tangent space $T_P\s$ under the differential $d\Psi_P$, i.e., $H_P \supset T_{\Psi(P)}\mathcal{W}$. However, $\mathcal{W}$ is a hypersurface, which gives that $H_P = T_{\Psi(P)}\mathcal{W}$. Hence the projective coordinates of the image of  $\Psi(P)$ by the Gauss map $\Gamma_\mathcal{W}$ of $\mathcal{W}$ are $(x_0(P)^{p^\ell}:\cdots: x_n(P)^{p^\ell})$. In other words, we conclude that
$$
\Gamma_\mathcal{W} \circ \Psi =\Phi_{p^\ell} \big\vert_{\s}.
$$
Since $\Phi_{p^\ell} \big\vert_{\s}$ is purely inseparable, we have that $\Gamma_\mathcal{W} \circ \Psi$ has separable degree $1$. Therefore
$$
1=\deg_s(\Gamma_\mathcal{W} \circ \Psi )=\deg_s(\Gamma_\mathcal{W})\deg_s(\Psi) \Lra \deg_s(\Psi)=1.
$$
Since $\Psi$ is separable, we obtain that $\deg(\Psi)=\deg_s(\Psi)=1$, which means that $\Psi$ is birational.

Finally, if $\mathcal{W}$ were a hyperplane, then its Gauss map $\Gamma_\mathcal{W}$ would be constant. Then, $\Gamma_\mathcal{W} \circ \Psi$ would be constant as well, contradicting the fact that $\Phi_{p^\ell} \big\vert_{\s}$ is not constant. Therefore, $\overline{\Psi(\s)}$ is nondegenerate.

\end{proof}


We now state the main result of this section.

\begin{thm}\label{55}
Let $\s \subset \p^n$ be an absolutely irreducible nondegenerate hypersurface  defined over $\fq$ with Gauss map $\Gamma$. If $\s$ is $\fq$-Frobenius nonclassical and it satisfies (A) with  $p^r<q$, then $\Gamma$ is purely inseparable. Moreover, in this case, we have that $\s=\s^{\prime \prime}$ and if $\Gamma^\prime$ is the Gauss map of $\s^\prime$, then $\Gamma^\prime \circ \Gamma=\Phi_q$.
\end{thm}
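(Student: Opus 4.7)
The plan is to leverage the $\fq$-Frobenius nonclassicality together with property~(A) to reach the hypotheses of Lemma~\ref{lt}, and then read off the composition $\Gamma'\circ\Gamma$ from the proportionality established inside the proof of that lemma.

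Writing $q=p^h$ and $\ell:=h-r$, the assumption $p^r<q$ ensures $\ell\ge 1$. Since $\s$ is $\fq$-Frobenius nonclassical, the relation $\sum F_i x_i^q=0$ holds in $\fq(\s)$. Property~(A) gives $F_i=Z\cdot U_i^{p^r}$ for a common factor $Z$, so this becomes $\sum U_i^{p^r} x_i^{q}=0$, equivalently $\bigl(\sum U_i\, x_i^{p^\ell}\bigr)^{p^r}=0$, and hence $\sum_{i=0}^n U_i x_i^{p^\ell}=0$ in $\fq(\s)$. The morphism $\Psi=(U_0:\cdots:U_n)$ is generically finite and separable by (A), so $\Gamma=\Phi_{p^r}\circ\Psi$ is generically finite; by the observation preceding Lemma~\ref{lt}, none of the $x_i$ is a $p$-power in $\fq(\s)$. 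Hence Lemma~\ref{lt} applies and $\Psi$ is birational, so $\deg_s\Gamma=\deg_s\Psi=1$ and $\Gamma$ is purely inseparable.

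To compute $\Gamma'\circ\Gamma$, I would unpack the proof of Lemma~\ref{lt}: it actually establishes the projective identity $(x_0^{p^\ell}:\cdots:x_n^{p^\ell})=(G_0(u):\cdots:G_n(u))$ in $\fq(\s)$, where $G\in\fq[U_0,\dots,U_n]$ is the homogeneous equation of $\s^{*}:=\overline{\Psi(\s)}$ and $G_i=\partial G/\partial U_i$. In other words, the Gauss map $\Gamma^{*}$ of $\s^{*}$ satisfies $\Gamma^{*}\circ\Psi=\Phi_{p^\ell}\vert_\s$. Since $\s'=\Phi_{p^r}(\s^{*})$ is cut out by the Frobenius twist $G^{(p^r)}$, a direct computation (using that integer coefficients $\alpha_i$ satisfy $\alpha_i^{p^r}=\alpha_i$ in $\F_p$, together with Frobenius additivity) shows that $\partial G^{(p^r)}/\partial T_i$ evaluated at $(u_0^{p^r}:\cdots:u_n^{p^r})$ equals $G_i(u)^{p^r}$. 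Therefore $\Gamma'\circ\Gamma(P)=(G_0(u)^{p^r}:\cdots:G_n(u)^{p^r})=(x_0^{p^r p^\ell}:\cdots:x_n^{p^r p^\ell})=\Phi_q(P)$. Since $\Gamma$ is dominant onto $\s'$, this finally yields $\s''=\overline{\Gamma'(\s')}=\overline{(\Gamma'\circ\Gamma)(\s)}=\Phi_q(\s)=\s$, the last equality because $\s$ is defined over $\fq$.

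The main obstacle will be the bookkeeping in the third paragraph: one must recognize that Lemma~\ref{lt} delivers more than the mere birationality of $\Psi$ (the proof supplies the precise proportionality between $(x_i^{p^\ell})$ and the Gauss map of $\s^{*}$), and one must correctly verify how partial derivatives interact with the Frobenius twist when comparing the Gauss map of $\s^{*}$ with that of $\s'=\Phi_{p^r}(\s^{*})$.
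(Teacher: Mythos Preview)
Your proof is correct and follows essentially the same route as the paper: derive $\sum U_i x_i^{p^{h-r}}=0$ from Frobenius nonclassicality and (A), invoke Lemma~\ref{lt} to get birationality of $\Psi$ (hence pure inseparability of $\Gamma$), and then read off the Gauss map of the dual from the proportionality hidden in the proof of that lemma. The paper compresses your third paragraph into a single line (``from $\sum U_i x_i^{p^{h-r}}=0$ we conclude $\Gamma'=(x_0^{p^{h-r}}:\cdots:x_n^{p^{h-r}})$''), tacitly identifying $\s'$ with $\s^{*}=\overline{\Psi(\s)}$ via $\Phi_{p^r}$; your explicit Frobenius-twist computation $\partial G^{(p^r)}/\partial T_i\big(u^{p^r}\big)=G_i(u)^{p^r}$ is exactly what justifies that identification, so your version is a more careful unpacking of the same argument rather than a different one.
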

\begin{proof}
If $x_0,\ldots,x_n \in \fq(\s)$ are the projective coordinate functions of $\s$,  (A) implies that
$$
\sum_{i=0}^{n}U_i^{p^r}x_i=0,
$$
and $\Gamma=(U_0^{p^r}:\cdots:U_n^{p^r})$ for some $r>0$. More precisely, $$\Gamma=\Phi_{p^r} \circ \Psi,$$ where  $\Psi=(U_0:\cdots:U_n)$ is a separable rational map. If $q=p^h$, we have from the Frobenius nonclassicality of $\s$ that 
$$
\sum_{i=0}^{n}F_i x_i^q=0 \ \ \Lra \ \  \sum_{i=0}^{n}U_i^{p^r}x_i^q=0 \ \ \Lra \ \ \sum_{i=0}^{n}U_ix_i^{p^{h-r}}=0.
$$
Since $\Psi$ is generically finite and separable, from Lemma \ref{lt} we conclude that $\overline{\Psi(\s)}$ is nondegenerate and $\fq(\s)=\fq(\Psi(\s))$. In particular, the rational map $\Delta=(x_0:\cdots:x_n)$ is defined in $\overline{\Psi(\s)}$, and since for a general point $P \in \s$ we have $x_i(U_0(P) :\cdots: U_n(P))=x_i(P)$ for all $i$, we conclude that $(\Delta \circ \Psi)(P)=P$, that is, 
$$
\Delta \circ \Psi=\operatorname{Id_\s}.
$$
Now, from the equality $\sum\limits_{i=0}^{n}U_ix_i^{p^{h-r}}=0$ we have that 
$$
\Gamma_{\overline{\Psi(\s)}}=(x_0^{p^{h-r}}: \cdots: x_n^{p^{h-r}})=\phi_{p^{h-r}}\circ \Delta,
$$
where $\Gamma_{\overline{\Psi(\s)}}$ stands for the Gauss map of $\overline{\Psi(\s)}$. On the other hand, since $\s^\prime=\Phi_{p^r}(\overline{\Psi(\s)})$, we obtain $\s^{\prime \prime}=\Phi_{p^r}(\overline{\Psi(\s)}^\prime)$, where $\overline{\Psi(\s)}^\prime$ is the dual of $\overline{\Psi(\s)}$. Hence we generically have that
$$
\Gamma^\prime=\Phi_{p^r} \circ \Gamma_{\overline{\Psi(\s)}}\circ \Phi_{p^r}^{-1}.
$$
Therefore, 
\begin{eqnarray}
\Gamma^\prime \circ \Gamma & =& \left(\Phi_{p^r} \circ \Gamma_{\overline{\Psi(\s)}}\circ \Phi_{p^r}^{-1} \right)\circ \left(\Phi_{p^r} \circ \Psi\right) \nonumber \\
&=&\Phi_{p^r} \circ\left( \phi_{p^{h-r}}\circ \Delta\right) \circ \Psi \nonumber \\
&=& \Phi_q \circ \left( \Delta \circ \Psi \right)=\Phi_q. \nonumber
\end{eqnarray}

\[
\begin{tikzcd}[
  row sep=1.5cm,    
  column sep=1.5cm  
]
\s \arrow[r, "\Gamma", dashed] \arrow[dr, "\Psi"',dashed] & \s^\prime \arrow[r, "\Gamma^\prime", dashed] & \s'' \\
& \overline{\Psi(\s)} \arrow[u, "\Phi_{p^r}"'] \arrow[r, "\Gamma_{\overline{\Psi(\s)}}"', dashed] \arrow[dr, "\Delta"', dashed] &\overline{\Psi(\s)}^{\prime} \arrow[u, "\Phi_{p^r}"'] \\
&& \overline{\Delta(\Psi(\s))} = \s \arrow[u, "\Phi_{p^{h-r}}"']
\end{tikzcd}
\]
Furthermore, $\deg_s(\Gamma^\prime)\deg_s(\Gamma)=\deg_s(\Phi_q)=1$. Hence, $\Gamma$ is purely inseparable. Finally,  $\s^{\prime\prime}=(\Gamma^\prime \circ \Gamma)(S)=\Phi_q(S)=S$, which finishes the proof.

\end{proof}

\begin{rem}
By \cite[Corollary I.2.8]{Za}, a sufficient condition for the Gauss map of $\s$ to be generically finite is that $S$ is nonsingular.   
\end{rem}

\section{Some open problems}

In this section, we present some interesting questions arising from our results. The first one relates to the inseparability of the Gauss map on a hypersurface $\s$ versus the inseparability of its restriction to curves lying on $\s$. 

Indeed, Theorem \ref{fnc} proves that if $\s \subset \p^n(\fqc)$ is an $\fq$-Frobenius nonclassical irreducible hypersurface such that $p>n$, then every curve $\xx \subset \s$ defined over $\fq$ such that $\xx$ is not contained in the singular locus of $\s$ is $\fq$-Frobenius nonclassical, provided that the Gauss map of $\s$ restricted to $\xx$ is inseparable. However,  Example \ref{resep} shows that  even if the Gauss map $\Gamma$ of a hypersurface $\s$ is inseparable, the restriction of $\Gamma$ to a curve $\xx \subset \s$ can be separable. Also, in this case, $\xx$ may be  classical. It should be noted that the hypersurface in Example \ref{resep} is not Frobenius nonclassical, while the Gauss map of  every Frobenius nonclassical hypersurface  presented in this paper admits $p$-th powers as coordinate functions.  As a matter of fact, the same holds true for  every Frobenius nonclassical hypersurface known to the authors.
In particular, the restriction of these Gauss maps to every curve contained in such Frobenius nonclassical hypersurfaces is clearly inseparable. This naturally raises the following problem.


\begin{prob}
Let $\s \subset \p^n(\fqc)$ be an $\fq$-Frobenius nonclassical hypersurface, where $\Char(\fq)=p>n$. Is it true that the Gauss map of $\s$ always admits as coordinate functions $p$-th powers? If not, is it true that if $\xx \subset \s$ is defined over $\fq$, then the Gauss map of $\s$ restricted to $\xx$ is always inseparable?
\end{prob}

At the heart of the above problem lies the fact that, while the definition of Frobenius nonclassical hypersurfaces is quite easy and carries a deep geometric meaning, it is indeed quite hard to work with. This goes to the extent that, even in the plane case we do not have a complete classification of $\fq$-Frobenius nonclassical curves. 

This makes one wonder about the existence of necessary and sufficient conditions for nonclassicality that are somehow easier to check than the actual definition.  Proposition \ref{class} provides one such criterion for nonclassicality of curves in $\mathbb{P}^{n}(\mathbb{K})$, provided that $p >n$ and $\mathbb{K}$ is algebraically closed. Based on this,  we raise the following question on $\fq$-Frobenius nonclassical curves $\xx$.

\begin{prob}
Can we give conditions to an $\fq$-Frobenius nonclassical curve $\xx$ so that the converse of Theorem \ref{fnc} holds?
\end{prob}
 \bmhead{Acknowledgements}
The authors thank the anonymous referee for the careful reading of the manuscript and for the many valuable comments and suggestions, which greatly improved the presentation of the paper. In particular, the current proof of Lemma \ref{lt} is largely based on their comments. 
The research of Nazar Arakelian was partially supported by grant 2023/03547-2, S\~ao Paulo Research Foundation (FAPESP). The research of Pietro Speziali was partially supported by  FAEPEX grant 2597/25.

\section*{Declarations}

\begin{itemize}
\item Funding Mentioned in Acknowledgements
\item Conflict of interest/Competing interests: Not applicable 
\item Ethics approval and consent to participate: Not applicable
\item Consent for publication: granted
\item Data availability: not applicable
\item Materials availability: not applicable
\item Code availability: not applicable
\item Author contribution: the two authors contributed equally.
\end{itemize}

\bibliographystyle{amsplain}

\begin{thebibliography}{9}
\addcontentsline{toc}{section}{Bibliography}


\bibitem{Ar2} N. Arakelian,  Number of rational branches of a singular plane curve over a finite field, Finite Fields Appl. 48(2017) 87--102


\bibitem{AB} N. Arakelian, H. Borges, Frobenius nonclassicality with respect to linear systems of curves of arbitrary degree, Acta Arith. 167 (2015), 43--66.

\bibitem{AB1} N. Arakelian, H. Borges, Frobenius nonclassicality of Fermat curves with respect to cubics, Isr. J. Math. 218 (2017) 273--297.

\bibitem{ADL}S. Asgarli, L. Duan, K-W. Lai, Transverse lines to surfaces over finite fields,  Manuscripta Math. 165(2021) 135--157.

\bibitem{ADL2} S. Asgarli, L. Duan, K-W. Lai, Frobenius nonclassical hypersurfaces, Math. Z. 319:18 (2025).

\bibitem{BN} E. Berardini, J. Nardi, Curves on Frobenius classical surfaces in $\p^3$ over finite fields, Acta Arith. 205 (2022),  323--340.

\bibitem{MAGMA} W. Bosma, J. Cannon, and C. Playoust, The Magma algebra system. I. The user language, \emph{J. Symbolic Comput.}, {\bf 24} (1997), 235-265.

\bibitem{Bo} H. Borges, Frobenius nonclassical components of curves with separated variables, J. Number Theory 159 (2016), 402--425.

\bibitem{BC} H. Borges, M. Coutinho, On some generalized Fermat curves and chords of an affinely regular polygon inscribed in a hyperbola, J. Pure Appl. Algebra 224 (2020) 239--249.


\bibitem{Ga}A. Garcia,  The curves $y^n=f(x)$ over finite fields, Arch. Math. {54} (1990) 36--44.


\bibitem{GV2}A. Garcia, J. F. Voloch, Fermat curves over finite fields, J. Number Theory 30 (1988), 345--356.

\bibitem{GV1}A. Garcia, J.F. Voloch, Wronskians and linear independence in fields of prime characteristic,  Manuscripta Math. 59(1987), 457--469.

\bibitem{Gi} M. Giulietti, On the number of rational points of a plane algebraic curve, Arch. Math. 82 (2004), 214--221.

\bibitem{GPTU} M. Giulietti, F. Pambianco, F. Torres , E. Ughi, On large complete arcs: odd case, Discrete Math. 255 (2002), 145--159.





\bibitem{HV} A. Hefez, J.F. Voloch, Frobenius non-classical curves, Arch. Math. 54 (1990), 263--273. 

\bibitem{HKT}J.W.P. Hirschfeld, G. Korchm\'aros, F.Torres, Algebraic curves over a finite field, Princeton Series in Applied Mathematics (2008).

\bibitem{Ho} M. Homma, A souped-up version of Pardini's theorem and its application to funny curves, Compositio Math. 71(3) (1989), 295--302.
\bibitem{KT} G. Korchm\'aros, F. Torres, Embedding of a Maximal Curve in a Hermitian Variety, Compositio Math. 128 (2001), 95-113.



\bibitem{Mi} J. Milne, Algebraic Geometry (Version 6.10). Retrieved from https://www.jmilne.org/math/CourseNotes/AG.pdf (2024).


\bibitem{Ro} L. Rodrigues, Frobenius n\~ao classicalidade de algumas fam\'ilias de curvas de Fermat generalizadas, Ph.D. Thesis, Universidade Federal do ABC (2024).

\bibitem{serre} J. P. Serre, Rational points on curves over Finite Fields, Soci\'et\'e Math\'ematique de France, 2020, vii+187 pp. 


\bibitem{SV} K.O. St\"ohr, J.F. Voloch, Weierstrass points on curves over finite fields, Proc. London Math. Soc. 52(1986) 1--19.

\bibitem{Wa} A. Wallace, Tangency and duality over arbitrary fields, Proc. Lond. Math. Soc. (3) 6 (1956) 321--342.

\bibitem{Za} F. L. Zak, Tangents and secants of algebraic varieties, volume 127. American Mathematical Soc., 1993.


\end{thebibliography}

\end{document}